\documentclass[ar4paper,12pt,english,openany]{article}
\usepackage[latin1]{inputenc}
\usepackage[T1]{fontenc}
\usepackage[francais]{babel}
\usepackage{amsfonts}
\usepackage{amscd}
\usepackage{graphicx}
\usepackage{babel}
\usepackage{fancyhdr}
\usepackage[a4paper,left=2cm,right=2cm,top=2cm,bottom=2cm]{geometry}
\usepackage[french]{minitoc}
\usepackage{tabularx}
\usepackage{amssymb}
\usepackage{amsthm}
\usepackage{multirow}
\usepackage{multicol}
\usepackage{array}
\usepackage[all]{xy}
\usepackage[centertags]{amsmath}
\usepackage{latexsym}
\usepackage{amsfonts}
\usepackage{array}
\newtheorem{theorem}{\textbf{Theorem}}[section]

\theoremstyle{plain}

\theoremstyle{plain}

\theoremstyle{plain}

\theoremstyle{plain}

\theoremstyle{plain}

\theoremstyle{plain}

\theoremstyle{plain}

\theoremstyle{plain}
\newtheorem{Propri t }{\textbf{Propri t }}[section]
\theoremstyle{plain}

\theoremstyle{plain}

\theoremstyle{plain}

\begin{document}\begin{center}
\textbf{Solution of second-order hyperbolic quasilinear
systems with  spatio-characteristic initial data in weighted Sobolev-type spaces under finite differentiability assumptions on the data}
\end{center}
\begin{center}
 \textbf{Louokdom Tamto Paul Giscard$^{1}$,   Duplex Elvis Houpa
 Danga$^{2}$ and and Kouakep Tchaptchi  Yannick$^{3}$}
 \end{center}
 \begin{center}
 \begin{small}
$^{1}$Department of Mathematics, Faculty of Sciences, The University of
Ngaoundere, Dang, Ngaoundere, 454, ADAMAWA, CAMEROON.\\
$^{2}$Department of Mathematics, Faculty of Sciences, The University of
Ngaoundere, Dang, Ngaoundere, 454, ADAMAWA, CAMEROON.\\
$^{3}$Department of SFTI-EGCIM, The University of
Ngaoundere, Dang, Ngaoundere, 454, ADAMAOUA, CAMEROON.
\end{small}
\end{center}
\begin{center}
*Corresponding author(s). E-mail(s) :  louokdompaulgiscard@gmail.com; \\
Contributing authors: elvishoupa@gmail.com ; kouakep@aims-senegal.org.
\end{center}
\textbf{Abstract} \par The aim of this work is to establish an existence
and uniqueness solution  for  spatio-characteristic second-order quasilinear hyperbolic
problems in  Sobolev type spaces  with weights    to clarify and complete the
previous work done by  H. Muller Zum Hagen and H.J. Seifert, Gen.
Rel. and Gravit. 1977.
We  use this result in  P. G. Louokdom tamto, PhD thesis ongoing, 2026  to establish a semi-global existence and uniqueness result for second-order quasilinear Goursat problems where the coefficients of the second derivatives depend linearly on the unknown in weighted Sobolev-type spaces, which we will apply to the harmonic gauge vacuum Einstein equations.
 \begin{center}
\textbf{Keywords: local solution, semi-global solution,  quasilinear system of
second-order, spatio-characteristic problem,
Goursat problem.}\par
 \textbf{ MSC Classification: 35L05, 35P25,  35Q75}
\end{center}
\section{ Introduction}
\par
   However already knowing to solve locally in Sobolev space with  weight for second
   order hyper-quasilinear hyperbolic Goursat problem \cite{dt}, and even globally under small
    norms of the initial data values on a characteristic conoid, we envisage to establish
    a semi-global existence and uniqueness result \cite{lo}  in Sobolev spaces with weights
    for second order hyper-quasilinear hyperbolic Goursat problems where the coefficients
     of the derivatives depend linearly on the unknowns variable which we will apply to
     the Einstein vacuum in harmonic gauge. This is done by closely following
     the approach initiated by M. Dossa and S. Bah for the second order
     semi-linear Cauchy problems with initial data value on characteristic
     conoid \cite{db} and used by D. Houpa and M. Dossa for second order
     semi-linear Goursat problems \cite{dh}.\par~ The establishment of such a
      semi-global existence and uniqueness result in Sobolev space
       with weight passing through the local resolution of second
       order hyper-quasilinear spatio-characteristic problems whose existence comes from the foliation by spatial surfaces of the domain delimited by the two intersecting characteristic hypersurfaces which carry the initial data depending on the semi-global resolution of the characteristic problems as initiated by J. Tolen \cite{t} during the semi-global resolution  approach to second-order linear Goursat problems. We
       propose in this work to establish this local existence and uniqueness
       result for second order hyper-quasilinear hyperbolic spatio-characteristic problems. Similar
        results wherer established in \cite{do} for second order hyper-quasilinear hyperbolic Cauchy
         problems with initial data value on a characteristic conoid and successfully applied to the
          local resolution of Einstein vacuum equations in harmonic gauge and in  \cite{dt} when the
   initial data are from Goursat and applied to the local resolution of
  Einstein-Yang-Mills-Higgs equations in harmonic gauge and Lorentz gauge. We therefore
 use a method similar to that used in \cite{dt} to establish our result. This fixed-point
 method whose main tools are: the existence and uniqueness result for the second order
linear hyperbolic problems obtained, the Sobolev type inequalities
and the Dionne's type lemma.\par~ The remainder of this work is
organised as it follows. In section 2, we establish the
result of existence and uniqueness of the second order linear
hyperbolic spatio-characteristic problems in Sobolev type spaces
with weights $(\ref{fij})$, clarifying and complement previous work
of H. Muller Zum Hagen and H.J. Seifert \cite{ms}. We
establish a local existence and uniqueness result of the second
order hyperquasilinear hyperbolic spatio-characteristic problems
$(\ref{6999tt})$ in section 3. Finally, there is a short discussion
in section 4.

\section{ Local solution of the spatio-characteristic problem
second-order hyperbolic linear}
 Let
$L=A^{\lambda\mu}(x^{\alpha})\partial^{2}_{\lambda\mu}$ an operator $x^{0}$-hyperbolic differential of the second order in $
\mathcal{Y}$.\\Consider the linear problem
 with spatio-characteristic initial data next:
\begin{equation}\label{fij}
\left\{
                \begin{array}{ll}
E_{r}~:~A^{\lambda\mu
}(x^{\alpha})\partial^{2}_{\lambda\mu}u_{r}+B^{\lambda
z}_{r}(x^{\alpha})\partial_{\nu}u_{z}
+C^{z}_{r}(x^{\alpha})u_{z}=f_{r}(x^{\alpha})
 ~~~in ~~~ \mathcal{Y}  \hbox{} \\
 u_{r}=\varphi_{r} ~~on ~~S^{1}~,~~
u_{r}=h_{r},~~\partial_{0}u_{r}=g_{r}~~on ~~S^{2} .
                \end{array}
              \right.
\end{equation}where
$\alpha,\lambda,\mu,\nu=0,...,N$,~~ $r,z=1,...,n$.\\
We also have the following summary notation:
\begin{equation}\label{f}
\left\{
                \begin{array}{ll}
E~:~A^{\lambda\mu}D_{\lambda\mu}u+B.Du
+C.u=f ~~~in ~~~ \mathcal{Y}  \hbox{} \\
 u=\varphi~~on ~~S^{1}~,~~
u=h,~~Du=g~~on ~~S^{2} .
                \end{array}
              \right.
\end{equation}with
$u=(u_{r})$,~~ $D_{\lambda\mu}u=(\partial^{2}_{\lambda\mu}u_{r})$,~~ $Du=(\partial_{\nu}u_{r})$,~~
$B=(B^{\lambda
z}_{r})$,~~$C=(C^{z}_{r})$,~~$f=(f_{r})$,\\$\varphi=(\varphi_{r})$,~~$g=(g_{r})$,~~$h=(h_{r})$,~~$E=(E_{r})$,~~
$\lambda,\mu=0,...,N$.\\
 We assume that:\\
$\bullet$ $\mathcal{Y}$ is a compact part of $\mathbb{R}_{+}\times \mathbb{R}^{N}$ defined by: $\lbrace (x^{0} ,x^{i}) \in \mathbb{R}_{+}\times \mathbb{R}^{N}: -x^{1}\leq x^{0}, (x^{2} ,...,x^{N}) \in B \rbrace$ where $B$ is closed bounded subset of $\mathbb{R}^{N-1}$ whose boundary $\partial \mathcal{Y}$ is regular and includes:\\
$-$ the union of the two hypersurfaces $S=S^{1}\cup S^{2}$;\\
$-$ $\Omega=\partial \mathcal{Y} \setminus S$ is a temporal hypersurface of class $C^{\infty}$ piecewise;\\
$\bullet$
$L=A^{\lambda\mu}(x^{\alpha})\partial^{2}_{\lambda\mu}$ an operator $x^{0}$-hyperbolic differential of the second order in $
\mathcal{Y}$;\\
$\bullet$ $S^{1}$ is a characteristic hypersurface of
$\mathbb{R}_{+}\times \mathbb{R}^{N}$ for operator
$L=A^{\lambda\mu}(x^{\alpha})\partial^{2}_{\lambda\mu}$ and of
equation $x^{0}=-x^{1}$.\\$\bullet$ $S^{2}$ is a spatial
hypersurface of $\mathbb{R}_{+}\times\mathbb{R}_{+}\times
\mathbb{R}^{N-1}$ for the operator
$L=A^{\lambda\mu}(x^{\alpha})\partial^{2}_{\lambda\mu}$
and with equation $x^{0}=0$.\\
$\bullet$ For any function $v$ defined in a domain $\mathcal{Y}$, we set $[v]^{w}=v\mid_{S^{w}}$ \begin{center} $[v]^{w}(x^{1},x^{2},x^{3})=v((w-2)x^{1},x^{1},x^{2},x^{3})$, $(w=1,2)$. \end{center}
$\bullet$ $\Gamma=S^{1}\cap S^{2}$ and is spatial for $L$;\\
In, unlike the spatial problem associated with
$E_{r}$ equations, knowledge of the initial data
$u_{r}=\varphi_{r}$ on $S^{1}$ makes it possible to determine for any
solution $u=(u_{r})$ of the Goursat problem associated with the equations
$E_{r}$, the functions $[\partial_{0}u_{r}]^{1}$ in a unique way, and consequently to determine the restrictions to $S^{1}$
of all the first derivatives of the possible solution through
the relationship
$[\partial_{i}u_{r}]^{1}=\partial_{i}\varphi_{r}+q^{1}_{i}[\partial_{0}u_{r}]^{ 1}$. this is what justifies
 the birth of a problem of singularity of the initial data
on the secant $\Gamma$. To resolve this singularity problem, it is important to impose a compatibility hypothesis on the initial data. This justifies the following compatibility hypothesis:
 \\
$\bullet$ \textbf{Compatibility assumption:}
$\partial^{k}_{0}[u]^{1}= [\partial^{k}_{0}u]^{2}$ for all
 $k$ $\in$ $[0,m]\cap \mathbb{N}$ on $\Gamma$ where $m$ is the ordre of differentiability of any possible solution of the problem $(\ref{fij})$.\\
$\bullet$ $ \mathcal{Y}$ is causal: $\forall$ $M_{0}$ $\in$ $
\mathcal{Y}$, $(C_{M_{0}}^{-}) \subseteq \mathcal{Y}$ and $M_{0}$ is
 the unique singular point of $(C_{M_{0}}^{-})$; where $C_{M_{0}}^{-}$ is the characteristic half-conoid for $L$  coming from $M_{0}$ and directed towards the past;  $(C_{M_{0}}^{-})$ the of $C_{M_{0}}^{-}$ located between $M_{0}$ and $S$;\\

$\bullet$ $L$ is regularly hyperbolic of class $C^{1}$ on $\mathcal{Y}$ with hyperbolicity constant $h$ defined in the following sense:\\
-there is a positive constant $h_{1}$ such that:
\begin{equation}\label{tssti}
\gamma^{00}> h_{1}.
\end{equation}
- there is a positive constant $h_{2}$ such that:
\begin{equation}\label{tsst}
\gamma^{ij}X_{i}X_{j}> h_{2}\sum\limits_{i=1}^{N}(X_{i})^{2},
\forall~~(X_{i})\neq 0.
\end{equation}-there is a positive constant $h_{3}$ such that:
\begin{equation}\label{tssst}
\gamma^{\alpha\beta} X_{\alpha}X_{\beta}<
h_{3}\sum\limits_{\alpha=0}^{N}(X_{\alpha})^{2},
\forall~~(X_{\alpha})\neq 0.
\end{equation}

 with
\begin{equation}\label{tssst}
\gamma^{ij}=-A^{ij},~~\gamma^{00}=A^{00},~~\gamma^{0i}=0.
\end{equation}
We set: $T_{0}=\max\lbrace x^{0}, (x^{0},x^{i}) \in \mathcal{Y} \rbrace$,
 $h=\max\lbrace h_{1}^{-1}, h_{2}^{-1}, h_{3}\rbrace$.
We set: $T_{0}=\max\lbrace x^{0}, (x^{0},x^{i}) \in \mathcal{Y} \rbrace$,
 $h=\max\lbrace h_{1}^{-1}, h_{2}^{-1}, h_{3}\rbrace$.
$\forall$ $t$ $\in$ $]0,T_{0}]$, we define the following spaces:
 $\mathcal{Y}_{t}=\lbrace (x^{0},x^{i}) \in \mathcal{Y}: x^{0}\leq t\rbrace$,
  $G_{t}=\lbrace (x^{0},x^{i}) \in \mathcal{Y}: x^{0}=t\rbrace$,
   $S^{w}_{t}=S^{w}\cap \mathcal{Y}_{t}$, $\mathcal{D}_{t}^{w}=\lbrace (x^{ i})
\in \mathbb{R}^{N}; (x^{0},x^{i}) \in S^{w}_{t}\rbrace$,
$\sum_{t}^{1}=S^{1}\cap G_{t}$, $\Omega_{t}=\Omega\cap
\mathcal{Y}_{t}$.
\\
$\bullet$ For any vector function $v=(v_{r})$ defined on $\Gamma$ and for any $p$ $\in$ $\mathbb{N}$, we set:
 \\
$\parallel v\parallel_{
H^{p}(\Gamma)}=(\sum\limits_{r}\sum\limits_{\mid
\alpha\mid \leq p} \int_{\Gamma} \mid \partial^{\alpha} v_{r}
\mid ^{2} d\tilde{x}) )^{\frac{1}{2}}$;\\
 $\bullet$ For any vector function $v=(v_{r})$ defined on $S^{w}$ and for any $p$ $\in$ $\mathbb{N}$, we set:
 \\
$\parallel v\parallel_{
H^{p}(\sum_{t}^{1},S^{1})}=(\sum\limits_{r}\sum\limits_{\mid
\alpha\mid \leq p} \int_{\sum_{t}^{1}} \mid \partial^{\alpha} v_{r}
\mid ^{2} d\sigma (\sum_{t}^{1}) )^{\frac{1}{2}}$, where
$d\sigma (\sum_{t}^{1})$ is the measure induced on $\sum_{t}^{1}$ by $dx'=dx^{1} dx^{2}... dx^{N}$;
$\parallel v\parallel_{ H^{p}(S_{t}^{1})}=(\int^{t}_{0} \parallel
v\parallel^{2}_{ H^{p}(\sum_{\tau}^{1},S^{1})}d\tau)^{\frac{1}{2}}
$~~;~~
$\parallel v\parallel_{ F^{p}(S_{t}^{2})}=(\int^{t}_{0} \parallel v\parallel^{2}_{ H^{p }(S_{\tau}^{2})}d\tau)^{\frac{1}{2}} $;
$\parallel v\parallel_{
H^{p}(S_{t}^{2})}=(\sum\limits_{r}\sum\limits_{\mid \alpha\mid \leq
p} \int_{ S_{t}^{2}} \mid \partial^{\alpha}
 v_{r} \mid ^{2} dx' )^{\frac{1}{2}}$;
$\parallel v\parallel_{ E^{p}(S_{t}^{1})}=ess\sup \limits_{ \tau \in
]0, t]}\parallel v\parallel_{ H^{p}(\sum_{\tau}^{1},S^{1})} $~;
$\parallel v\parallel_{ E^{p}(S_{t}^{2})}=ess\sup \limits_{ \tau \in ]0, t]}\parallel v\parallel_{ H^{p }(S^{2}_{\tau})} $;
if the right-hand sides exist and the derivatives being taken in the sense of the distributions.\\
$\bullet$ For any vector function $v=(v_{r})$ defined on $\mathcal{Y}_{t}$, we set: $\forall$ $p$ $\in$ $\mathbb{ N}$,\\
$\parallel v\parallel_{
H^{p}(G_{\tau},\mathcal{Y})}=(\sum\limits_{r}\sum\limits_{\mid
\alpha\mid \leq p} \int_{G_{\tau}} \mid  D^{\alpha} v_{r} \mid ^{2}
dx' )^{\frac{1}{2}}$; $\parallel v\parallel_{
K^{p}(\mathcal{Y}_{t})}=(\int_{0}^{t}\tau^{-1} \parallel
v\parallel^{2}_{ H^{p}(G_{\tau},\mathcal{Y})} d\tau )^{\frac{1}{2}}$;\\
$\parallel v\parallel_{ E^{p}(\mathcal{Y}_{t})}= ess\sup \limits_{
\tau \in ]0, t]} \tau^{-\frac{1}{2}} \parallel v\parallel_{
H^{p}(G_{\tau},\mathcal{Y})} $; $\parallel v\parallel_{
\mathcal{K}^{p}(\Gamma,S^{1},\mathcal{Y})}=
(\sum\limits_{k=0}^{p-1}  \parallel [\partial^{k}_{0}v]^{1} \parallel^{2}_{ H^{2(p-k)-1}(\Gamma,S^{1})}   )^{\frac{1}{2}}$;\\
 $\parallel v\parallel_{ \mathcal{K}^{p}_{1}(\Gamma,S^{1},\mathcal{Y})}=
(\sum\limits_{k=1}^{p-1}  \parallel [\partial^{k}_{0}v]^{1}
\parallel^{2}_{ H^{2(p-k)-1}(\Gamma,S^{1})}   )^{\frac{1}{2}}$;
$\parallel v\parallel_{
\mathcal{K}^{p}(\Gamma,S^{2},\mathcal{Y})}=(\sum\limits_{k=0}^{p-1}
\parallel [\partial^{k+1}_{0}v]^{2} \parallel^{2}_{
F^{p-k-2}(\Gamma,S^{2})})^{\frac{1}{2}}$;\\
$\parallel v\parallel_{
\mathcal{K}^{p}_{1}(\Gamma,S^{2},\mathcal{Y})}=(\sum\limits_{k=1}^{p-1}
 \parallel [\partial^{k+1}_{0}v]^{2} \parallel^{2}_{ F^{p-k-2}(\Gamma,S^{2})})^{\frac{1}{2}}$;
$\parallel v\parallel_{
\mathcal{K}^{p}_{1}(S_{t}^{1},\mathcal{Y})}=(\sum\limits_{k=1}^{p-1}
\parallel [\partial^{k}_{0}v]^{1} \parallel^{2}_{
H^{2(p-k)-1}(S_{t}^{1})}   )^{\frac{1}{2}}$ ; \\$\parallel
v\parallel_{ \mathcal{K}^{p}(S_{t}^{1},\mathcal{Y})}=
(\sum\limits_{k=0}^{p-1}  \parallel [\partial^{k}_{0}v]^{1}
\parallel^{2}_{ H^{2(p-k)-1}(S_{t}^{1})}   )^{\frac{1}{2}}$ ;
$\parallel v\parallel_{ \mathcal{K}^{p}(S_{t}^{2},\mathcal{Y})}=
(\sum\limits_{k=0}^{p-1}  \parallel [\partial^{k+1}_{0}v]^{2}
\parallel^{2}_{ F^{p-k-1}(S_{t}^{2})}   )^{\frac{1}{2}}$ ;\\
$\parallel v\parallel_{ \mathcal{K}^{p}_{1}(S_{t}^{2},\mathcal{Y})}=
(\sum\limits_{k=1}^{p-1}  \parallel [\partial^{k+1}_{0}v]^{2}
\parallel^{2}_{ F^{p-k-1}(S_{t}^{2})}   )^{\frac{1}{2}}$ ; $\parallel
v\parallel_{ \mathcal{E}^{p}_{1}(S_{t}^{1},\mathcal{Y})}=(
 \sum\limits_{k=1}^{p-1}  \parallel [\partial^{k}_{0}v]^{1} \parallel^{2}_{
 E^{2(p-k)-1}(S_{t}^{1})})^{\frac{1}{2}}$;\\
$\parallel v\parallel_{ \mathcal{E}^{p}_{1}(S_{t}^{2},\mathcal{Y})}=
  (\sum\limits_{k=1}^{p-1}  \parallel [\partial^{k+1}_{0}v]^{2}\parallel^{2}_{ E^{p-k-1}(S_{t}^{2})}   )^{\frac{1}{2}}$;
$\parallel v\parallel_{
\mathcal{E}^{p}(S_{t}^{1},\mathcal{Y})}=(\sum\limits_{k=0}^{p-1}
\parallel [\partial^{k}_{0}v]^{1} \parallel^{2}_{
E^{2(p-k)-1}(S_{t}^{1})})^{\frac{1}{2}}$;\\ $\parallel v\parallel_{
\mathcal{E}^{p}(S_{t}^{2},\mathcal{Y})}=(
 \sum\limits_{k=0}^{p-1}  \parallel[ \partial^{k+1}_{0}v]^{2}\parallel_{ E^{p-k-1}(S_{t}^{2})}   )^{\frac{1}{2}}
 $;
$\parallel v\parallel_{ \mathcal{K}^{p}_{1}(\mathcal{Y}_{t})}=
\parallel v \parallel_{ K^{p}(\mathcal{Y}_{t})}+
  \sum\limits_{w=1}^{2}\parallel v \parallel_{ \mathcal{K}^{p}_{1}(S_{t}^{w},
\mathcal{Y})}   $;\\
$\parallel v\parallel_{ \mathcal{K}^{p}(\mathcal{Y}_{t})}=
\parallel v \parallel_{ K^{p}(\mathcal{Y}_{t})}+
  \sum\limits_{w=1}^{2}\parallel v \parallel_{ \mathcal{K}^{p}(S_{t}^{w},
\mathcal{Y})}  $; $\parallel v\parallel_{
\mathcal{E}^{p}_{1}(\mathcal{Y}_{t})}=\parallel v \parallel_{
E^{p}(\mathcal{Y}_{t})}+
  \sum\limits_{w=1}^{2} \parallel v \parallel_{ \mathcal{E}^{p}_{1}(S_{t}^{w}, \mathcal{Y})}   $
  ;
$\parallel v\parallel_{ \mathcal{E}^{p}(\mathcal{Y}_{t})}= \parallel
v \parallel_{ E^{p}(\mathcal{Y}_{t})}+
  \sum\limits_{w=1}^{2} \parallel v \parallel_{ \mathcal{E}^{p}(S_{t}^{w}, \mathcal{Y})} $ ;
if the right-hand sides exist and the derivatives being taken in the sense of the distributions.\\we establish the following
result of existence and uniqueness of the second order linear
hyperbolic spatio-characteristic problems in Sobolev type spaces
with weights $(\ref{fij})$, clarifying and complement previous work
of H. Muller Zum Hagen and H.J. Seifert \cite{ms}.
 \begin{theorem} \label{40ssytt}We assume that\\
$\bullet$ $s$ is a natural number $> \frac{N}{2}+1$, $N\geq 2$;
  the functions $A^{\lambda \mu}$ verify the hypotheses $(\ref{tssti}-\ref{tssst})$;
 the functions $A^{\lambda \mu}$, $B$, $C$ are elements of $\mathcal{K}^{s}(Y_{T})$;
 the function $f$ is an element of $\mathcal{K}^{s-1}(Y_{T})$;\\
$\bullet$ the functions $\varphi$ are elements of $H^{2s-1}(S_{T}^{1})$ such that $\varphi\mid_{\Gamma}$ $\in$ $ F^{2s-2}(\Gamma,S^{2})$;
 the functions $h$ and $g$ are respectively the elements of $F^{s}(S_{T}^{2})$ and $F^{s-1}(S_{T}^ {2})$;\\
 $\Gamma=S^{1}\cap S^{2}$ and is spatial for $L$;
 $\partial^{k}_{0}[u]^{1}= [\partial^{k}_{0}u]^{2}$ for all $k$ $\in$ $[0,s]\cap \mathbb{N}$ on $\Gamma=S^{1}\cap S^{2}$.\\
Then:~~1) $(\ref{fij})$ admits a unique solution $u$ $\in$
$\mathcal{E}_{1}^{s}(Y_{T})$ and this solution verifies: $\forall$
$t$ $\in$ $]0,T]$,

\begin{equation}\label{4101ksiiisiiiu}
                \parallel u\parallel_{\mathcal{E}_{1}^{s}(Y_{t})} \leq c(h,\tilde{\tilde{\gamma}}_{s}^{Y_{t}},\tilde{\tilde{\gamma}}_{s,}^{\Gamma},t) \left\{
                \begin{array}{ll}   \parallel \varphi \parallel_{H^{2s-1}(S_{t}^{1})} +  \parallel \varphi \parallel_{F^{2s-2}(\Gamma,S^{2})}
                +\parallel h \parallel_{F^{s}(S_{t}^{2})} & \hbox{} \\+\parallel g \parallel_{F^{s-1}(S_{t}^{2})}+
                 t^{\frac{1}{2}} \parallel f \parallel_{\mathcal{K}^{s-1}(Y_{t})}+\sum\limits^{2}_{w=1}\parallel f \parallel_{\mathcal{K}^{s-1}(\Gamma,S^{w}, Y)} \end{array}
              \right\}.
\end{equation} with:
 \begin{equation}\label{bcxr}
    \left\{
                \begin{array}{ll} \tilde{\tilde{\gamma}}_{s}^{Y_{t}}=[\parallel A^{\lambda\mu}\parallel^{2}_{\mathcal{K}^{s}(Y_{t})}+\parallel B\parallel^{2}_{\mathcal{K}^{s}(Y_{t})}+\parallel C\parallel^{2}_{\mathcal{K}^{s}(Y_{t})}]^{\frac{1}{2}},  & \hbox{} \\
              \tilde{\tilde{\gamma}}_{s}^{\Gamma}  =\sum\limits_{w=1}^{2}[\parallel [A^{\lambda\mu}]^{w}\parallel^{2}_{\mathcal{K}^{s}(\Gamma, S^{w})}+\parallel [B]^{w}\parallel^{2}_{\mathcal{K}^{s}(\Gamma, S^{w})}+\parallel [C]^{w}\parallel^{2}_{\mathcal{K}^{s}(\Gamma, S^{w})}]^{\frac{1}{2}}
                 \end{array}
              \right.
\end{equation} 2) If in addition the  functions $\varphi$
are elements of $E^{2s-1}(S_{T}^{1})$ and such that
$\varphi\mid_{\Gamma}$ $\in$ $E^{2s-2}(\Gamma,S^{2})$, $h$ $\in$
$E^{s}(S_{T}^{2})$, $g$ $\in$ $E^{s-1}(S_{T}^{2})$ then
$(\ref{fij})$   admits a unique solution $u$ $\in$
$\hat{\mathcal{E}}^{s}(Y_{T})$ and this solution verifies:
 $\forall$ $t$ $\in$ $]0,T]$,
\begin{equation}\label{4101ksiiisiiiut}
                \parallel u\parallel_{\mathcal{E}^{s}(Y_{t})} \leq c(h,\tilde{\tilde{\gamma}}_{s}^{Y_{t}},\tilde{\tilde{\gamma}}_{s,}^{\Gamma},t) \left\{
                \begin{array}{ll}   \parallel \varphi \parallel_{E^{2s-1}(S_{t}^{1})} +
                 \parallel \varphi \parallel_{E^{2s-2}(\Gamma,S^{2})} +\parallel h \parallel_{E^{s}(S_{t}^{2})} & \hbox{} \\
                 +\parallel g \parallel_{E^{s-1}(S_{t}^{2})}+ t^{\frac{1}{2}} \parallel f \parallel_{\mathcal{K}^{s-1}(Y_{t})}
                 +\sum\limits^{2}_{w=1}\parallel f \parallel_{\mathcal{K}^{s-1}(\Gamma,S^{w}, Y)}   \end{array}
              \right\}.
\end{equation} \end{theorem}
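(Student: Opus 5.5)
The plan is to reduce the spatio-characteristic problem $(\ref{fij})$ to a pure Goursat (characteristic) problem and a pure Cauchy problem, and to glue the two pieces along $\Gamma$. The compatibility assumption makes this gluing work.

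\textbf{Step 1: data on $\Gamma$.} Use the equations $E_{r}$ restricted to $S^{1}$. Because $S^{1}$ is characteristic, these restrictions are transport (ODE-type) equations along the bicharacteristic direction $x^{0}=-x^{1}$. They determine recursively the traces $[\partial_{0}^{k}u]^{1}$ for $k=1,\dots,s-1$, starting from $\varphi$ and from the value on $\Gamma$. That value is furnished by $[\partial_{0}^{k}u]^{2}$ on $\Gamma$, through the compatibility hypothesis $\partial_{0}^{k}[u]^{1}=[\partial_{0}^{k}u]^{2}$. Each transport equation loses one derivative along $\Gamma$ but none transversally along $S^{1}$. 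This explains the index pattern $H^{2(s-k)-1}(S^{1}_{t})$ and $F^{s-k-1}(S^{2}_{t})$ in the norms $\mathcal{K}^{s}(S^{w}_{t},\mathcal{Y})$. Integrating the transport equations with Gronwall's inequality, and estimating the products with the coefficients by the Sobolev-type (Moser) inequalities valid since $s>\frac{N}{2}+1$, I would obtain bounds for $\|u\|_{\mathcal{K}^{s}_{1}(S^{1}_{t},\mathcal{Y})}$ and $\|u\|_{\mathcal{E}^{s}_{1}(S^{1}_{t},\mathcal{Y})}$. These bounds are in terms of $\|\varphi\|_{H^{2s-1}(S^{1}_{t})}$, $\|\varphi\|_{F^{2s-2}(\Gamma,S^{2})}$, $\sum_{w}\|f\|_{\mathcal{K}^{s-1}(\Gamma,S^{w},Y)}$ and $\tilde{\tilde{\gamma}}^{\Gamma}_{s}$. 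On $S^{2}$ the higher traces $[\partial_{0}^{k+1}u]^{2}$ are obtained algebraically from $h,g$ and $E_{r}$, using $\gamma^{00}>h_{1}$ to solve for $\partial_{00}^{2}u$.

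\textbf{Step 2: extension and energy estimate.} Next I would construct, by a Whitney/Seeley-type extension compatible with the traces from Step 1, a function $\tilde u$ on $\mathcal{Y}_{T}$ with the prescribed traces on $S^{1}$ and $S^{2}$. Its norm $\|\tilde u\|_{\mathcal{K}^{s}}$ should be controlled by the right-hand side of $(\ref{4101ksiiisiiiu})$. Setting $v=u-\tilde u$ gives a problem with homogeneous data and source $f-E\tilde u\in\mathcal{K}^{s-1}$. I would derive the weighted energy inequality by multiplying the equation for $\partial^{\alpha}v$, $|\alpha|\le s-1$, by $\partial_{0}\partial^{\alpha}v$ and integrating over $\mathcal{Y}_{t}$ using the energy tensor of $L$. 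The boundary contribution on $S^{1}$ has a sign, since $S^{1}$ is characteristic, and vanishes by homogeneity; the contribution on $\Omega$ is handled by causality of $\mathcal{Y}$; and the one on $S^{2}$ vanishes. Regular hyperbolicity $(\ref{tssti})$--$(\ref{tssst})$ makes the energy on $G_{\tau}$ equivalent to $\|v\|^{2}_{H^{s}(G_{\tau},\mathcal{Y})}$. Commutators $[\partial^{\alpha},A^{\lambda\mu}]\partial^{2}_{\lambda\mu}v$ are estimated by Sobolev inequalities on $G_{\tau}$. The weight $\tau^{-1}$ in $K^{p}$ is then absorbed by a Gronwall argument in the form $\tau^{-1}\|v\|^{2}_{H^{s}(G_\tau)}\le c\,t\,\|f\|^{2}_{\mathcal{K}^{s-1}}$, i.e.\ the Dionne-type lemma. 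This produces the factor $t^{1/2}$.

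\textbf{Step 3: existence and uniqueness.} Existence would follow by approximating the coefficients and data by smooth ones, invoking the classical smooth theory of Müller zum Hagen--Seifert \cite{ms}, and passing to the limit using the uniform estimate together with weak-$*$ compactness in $E^{s}$. Uniqueness comes from the same energy inequality at level $s=1$ applied to the difference of two solutions. Part 2) follows identically by replacing the $L^{2}$-in-$\tau$ norms of the data by $\operatorname{ess\,sup}$ norms in Step 1, so that traces land in $\mathcal{E}^{s}$ rather than $\mathcal{E}^{s}_{1}$.

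\textbf{Main obstacle.} I expect the main obstacle to be Step 1 combined with the extension. The weights degenerate at $\Gamma$, where $G_{\tau}$ shrinks as $\tau\to0$. The loss-of-derivatives bookkeeping $2(s-k)-1$ must be shown to close under the nonsmooth coefficients in $\mathcal{K}^{s}$, and the extension must respect the $\tau^{-1/2}$ weights. I would first try to handle it by straightening $S^{1}$ via $x^{0}\mapsto x^{0}+x^{1}$ and then working with Hardy-type inequalities in $\tau$ near $\Gamma$.
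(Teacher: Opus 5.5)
Your outline follows the Dossa-type template behind the paper's one-line proof, which only points to Theorem 6.2.2.4 of \cite{hou}. Step 1 and the remainder estimate in Step 2 are sound, but Step 2 breaks when it is applied to the full solution. The cause is the geometric picture you use. You say the weights degenerate at $\Gamma$ ``where $G_{\tau}$ shrinks as $\tau\to0$''. That holds when both hypersurfaces are characteristic: $G_{\tau}$ is then a slab of width $\sim\tau$, and the factor $\tau^{-1}$ in $K^{p}(\mathcal{Y}_{t})$ and $E^{p}(\mathcal{Y}_{t})$ exactly compensates $|G_{\tau}|\sim\tau$. Here, however, $S^{2}\subset\{x^{0}=0\}$ is spacelike with positive $N$-dimensional measure, and $G_{\tau}\to G_{0}=S^{2}$. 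Consequently, for any $w$ with $w|_{S^{2}}\not\equiv0$, $\|w\|_{H^{p}(G_{\tau},\mathcal{Y})}$ stays bounded below as $\tau\to0$. Hence $\int_{0}^{t}\tau^{-1}\|w\|^{2}_{H^{p}(G_{\tau},\mathcal{Y})}\,d\tau=\infty$ and $\tau^{-1/2}\|w\|_{H^{p}(G_{\tau},\mathcal{Y})}\to\infty$. Your extension $\tilde u$ equals $h$ on $S^{2}$, so $\|\tilde u\|_{\mathcal{K}^{s}(\mathcal{Y}_{t})}=\infty$ whenever $h\not\equiv0$. Three things follow: the bound of $\tilde u$ by the data cannot hold; $f-E\tilde u$ is in general not in $\mathcal{K}^{s-1}(\mathcal{Y}_{t})$; and $u=\tilde u+v$ cannot belong to $\mathcal{E}^{s}_{1}(\mathcal{Y}_{t})$. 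Hardy inequalities near $\Gamma$ cannot repair this, because the degeneracy sits on all of $S^{2}$, not at $\Gamma$.

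The same computation shows that the weighted setting does not fit this configuration at all. Since $A^{00}=\gamma^{00}>h_{1}>0$, we have $\|A^{00}\|^{2}_{H^{s}(G_{\tau},\mathcal{Y})}\geq h_{1}^{2}|G_{\tau}|$, which stays bounded below, so $A^{00}\notin\mathcal{K}^{s}(Y_{T})$. Read literally, the hypotheses can never hold; read as intended, the weighted left-hand side cannot be controlled for nonzero Cauchy data. Your Dionne/Gronwall step does survive for the remainder. Suppose $v$ has zero data and $\int_{0}^{t}\tau^{-1}\|F\|^{2}_{H^{s-1}(G_{\tau},\mathcal{Y})}\,d\tau<\infty$, which forces $F$ to vanish on $S^{2}$ in the $H^{s-1}$ sense. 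Then, up to the flux terms on $S^{1}$, $\|v\|_{H^{s}(G_{\tau},\mathcal{Y})}\lesssim\int_{0}^{\tau}\|F\|_{H^{s-1}(G_{\sigma},\mathcal{Y})}\,d\sigma\lesssim\tau\,\|F\|_{K^{s-1}(\mathcal{Y}_{\tau})}$, which is where the factor $t^{1/2}$ comes from. A workable proof therefore has to change the norms. For example, split off a Cauchy-type piece carrying $h,g$ and estimate it, and the coefficients, in unweighted norms such as $\sup_{\tau}\|u\|_{H^{s}(G_{\tau},\mathcal{Y})}$, keeping the weights for the part that vanishes on $S^{2}$; this changes the inequality being proved. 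A smaller point in Step 1: the transport equation for $[\partial^{k}_{0}u]^{1}$ has a source containing $\partial^{2}_{ij}[\partial^{k-1}_{0}u]^{1}$ with $i,j\geq2$. Each step therefore costs two derivatives along $\Gamma$, which is what the drop by $2$ in the index $2(s-k)-1$ records; a loss of ``one derivative'' would not produce that pattern.
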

\textbf{Proof:} It is done in a similar way to that of theorem 6.2.2.4 of \cite{hou}.
\\we establish the following substitution lemmas which help us later.

\begin{theorem} \label{33ss}Let $u=(u_{m})_{1\leq m\leq l}$ be an
element of $\mathcal{E}^{p}(Y_{t}) $
 ; $Z$ an open of $\mathbb{R}^{l}$, $f$ an element of $C^{2r-1}_{b}(Y_{t}\times Z)$ .\\We assume that $1\leq r \leq p$, $\frac{N}{2}< p$; $u:$ $x\mapsto$ $u(x)=(u_{1}(x),...,u_{l}(x))$ is a mapping from $Y_{t}$ into $Z $. We note $\tilde{f}_{u}$: $x\mapsto$ $\tilde{f}_{u}(x)=f(x,u(x))$.\\Then $\tilde {f}_{u}$ satisfies the following inequality:
\begin{equation}\label{34ss}
\parallel \tilde{f}_{u} \parallel_{\mathcal{E}^{r}(Y_{t})}\leq c_{t}(r,p)\parallel \tilde{f}_{ u} \parallel_{ C^{2r-1}_{b}(Y_{t}\times Z)}[1
+\parallel u \parallel_{\mathcal{E}^{p}(Y_{t}) }]^{2r-1}.
\end{equation}
As $\mathcal{E}^{r}(Y_{t})$ injects contin$\hat{u}$ment into
$\mathcal{K}^{r}(Y_{t})$ , we also have:
\begin{equation}\label{35ss}
\parallel \tilde{f}_{u} \parallel_{\mathcal{K}^{r}(Y_{t})}\leq c_{t}(r,p)\parallel \tilde{f}_{ u}
 \parallel_{ C^{2r-1}_{b}(Y_{t}\times Z)}[1+\parallel u \parallel_{\mathcal{E}^{p}(Y_{t}) }]^{2r-1}.
\end{equation}
\end{theorem}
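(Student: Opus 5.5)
The estimate splits according to the definition of $\parallel\cdot\parallel_{\mathcal{E}^{r}(Y_{t})}$ into three kinds of terms. The first is the interior part $\parallel \tilde f_u\parallel_{E^{r}(Y_t)}=\operatorname{ess\,sup}_\tau \tau^{-1/2}\parallel \tilde f_u\parallel_{H^{r}(G_\tau,Y)}$. The other two are the trace parts on $S^{1}_t$ and on $S^{2}_t$; they involve $[\partial_0^k \tilde f_u]^1$ in $E^{2(r-k)-1}(S^1_t)$ and $[\partial_0^{k+1}\tilde f_u]^2$ in $E^{r-k-1}(S^2_t)$, for $0\le k\le r-1$. I will bound each of them separately by $\parallel f\parallel_{C_b^{2r-1}}(1+\parallel u\parallel_{\mathcal{E}^p(Y_t)})^{2r-1}$. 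The tool in every case is the chain rule (Fa\`a di Bruno formula) combined with Sobolev multiplication and embedding inequalities; the latter are valid because $p>N/2$, so $H^{p}$ on the relevant $N$-dimensional slices embeds in $L^\infty$.

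\textbf{Interior term.} For a multi-index $\alpha$ with $|\alpha|\le r$, the derivative $D^\alpha \tilde f_u$ is a finite sum of terms of the form
\[
(\partial_x^{\beta}\partial_u^{j} f)(x,u)\, D^{\alpha_1}u\cdots D^{\alpha_j}u,\qquad |\beta|+\sum_i|\alpha_i|=|\alpha|,\ j\le r .
\]
The coefficient is bounded by $\parallel f\parallel_{C^{r}_b}\le \parallel f\parallel_{C^{2r-1}_b}$. The product of derivatives I estimate in $L^2(G_\tau)$ with the Gagliardo--Nirenberg/Moser inequality on $G_\tau$: the factor carrying the most derivatives is put in $L^2$, and the others are put in $L^\infty$ or in intermediate $L^q$ spaces. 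Since $r\le p$ and $p>N/2$, this yields
\[
\parallel D^{\alpha_1}u\cdots D^{\alpha_j}u\parallel_{L^2(G_\tau)}\le c\,\parallel u\parallel_{H^p(G_\tau)}^{j}.
\]
The domain $G_\tau$ is bounded and piecewise regular, which is needed for these inequalities; an extension operator handles the boundary. Multiplying by $\tau^{-1/2}$ and using $\parallel u\parallel_{H^p(G_\tau)}\le \tau^{1/2}\parallel u\parallel_{E^p}$, I get the bound $\tau^{1/2}\parallel u\parallel_{E^p}^{j}$ for $j\ge1$. For $j=0$ the term is controlled through $\tau^{-1/2}|G_\tau|^{1/2}$. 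This requires $|G_\tau|\lesssim \tau$, which holds because $G_\tau$ is bounded by $-x^1\le \tau$. The powers of $\tau$ that appear are absorbed into $c_t$.

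\textbf{Trace terms.} On $S^{1}$, the function $[\partial_0^k\tilde f_u]^1$ must be controlled in $H^{2(r-k)-1}$ of the slices $\Sigma^1_\tau$. Its total derivative count is at most $k+2(r-k)-1\le 2r-1$. This is why $f\in C_b^{2r-1}$ is required and why the final exponent is $2r-1$: the Fa\`a di Bruno expansion now contains up to $2r-1$ factors. Each factor has the form $[\partial_0^{k_i}u]^1$ differentiated tangentially $m_i$ times, with $\sum_i (k_i+m_i)\le 2r-1$. Since $\parallel u\parallel_{\mathcal{E}^p}$ controls $[\partial_0^{k}u]^1$ in $E^{2(p-k)-1}$, the factor with the highest weighted order goes in $L^2$ and the remaining ones go in $L^\infty$ of the $(N-1)$-dimensional slice. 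I must check that $2(p-k)-1>(N-1)/2$ whenever a sup-norm is needed. This holds for the lower-order factors, because $p>N/2$ and the orders are shared among the factors. The $S^{2}$ terms are handled identically, with $[\partial_0^{k+1}u]^2\in E^{p-k-1}(S^2_t)$ and $H^{p-k-1}$ on $S^2_\tau$.

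\textbf{Main obstacle.} The hard part is the counting on $S^{1}$. Mixed products such as $[\partial_0^{k_1}u]^1\,\partial^{m}[\partial_0^{k_2}u]^1$ combine factors that lie in Sobolev spaces of different orders, so I must verify case by case that a suitable H\"older split exists in every configuration. To organise this I will first try a weighted order: give $\partial_0$ weight $2$ and tangential derivatives weight $1$, so that the data norm becomes uniform in this weighted count. The problem then reduces to a standard Moser estimate in anisotropic Sobolev spaces on the slices.

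Finally, (\ref{35ss}) follows from (\ref{34ss}) and the continuous injection $\parallel v\parallel_{\mathcal{K}^r(Y_t)}\le c_t\parallel v\parallel_{\mathcal{E}^r(Y_t)}$. On the interior part, $\int_0^t\tau^{-1}\parallel v\parallel^2_{H^r(G_\tau)}\,d\tau\le t\,\parallel v\parallel_{E^r}^2$. On the trace parts, $\parallel\cdot\parallel_{F^q}\le t^{1/2}\parallel\cdot\parallel_{E^q}$ and the same holds on $S^{1}$.
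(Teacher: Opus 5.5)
\textbf{There is a genuine gap, at the zero-order ($j=0$) term of your interior estimate.} To bound $\tau^{-1/2}\|(\partial_x^{\beta}f)(\cdot,u)\|_{L^2(G_\tau)}$ by $\|f\|_{C^0_b}$ you need $|G_\tau|\lesssim\tau$. You justify this by saying that $G_\tau$ is bounded by $-x^1\le\tau$, but that constraint is one-sided: it only says $x^1\ge-\tau$.

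In the spatio-characteristic domain, $\mathcal{Y}$ lies above $S^1$ where $x^1\le0$ and above the spacelike piece $S^2\subset\{x^0=0,\ x^1\ge0\}$ where $x^1\ge0$. So $G_\tau$ stretches from $x^1=-\tau$ to the timelike boundary $\Omega$. Its measure stays bounded away from $0$ as $\tau\to0$: it tends to the $N$-dimensional measure of $S^2$, which is positive since $S^2$ carries the Cauchy data $h,g$.

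No better argument can close this step. For $f\equiv1$ and any admissible $u$ you get $\tilde f_u\equiv1$ and $\|\tilde f_u\|_{E^{r}(\mathcal{Y}_t)}=\sup_{0<\tau\le t}\tau^{-1/2}|G_\tau|^{1/2}=+\infty$, while the right-hand side of (\ref{34ss}) is finite. With the norms as defined, (\ref{34ss}) is therefore false in this geometry; it can only hold for $f$ vanishing suitably near $S^2$. The weight $\tau^{-1/2}$ comes from the Goursat setting, where $S^2$ is also characteristic and $G_\tau=\{|x^1|\le\tau\}\times B$ has measure $2\tau|B|$. Testing the claimed inequality on constants would have exposed this before any Fa\`a di Bruno bookkeeping.

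Two further points would need fixing even in a geometry where the weight is meaningful.

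\textbf{The uniform Moser constant.} Your $\tau$-independent Moser constant on $G_\tau$ is incompatible with $|G_\tau|\lesssim\tau$. On a slab of width $2\tau$, the embedding $H^p\hookrightarrow L^\infty$ costs a factor $\tau^{-1/2}$. The correct bound is $\|D^{\alpha_1}u\cdots D^{\alpha_j}u\|_{L^2(G_\tau)}\lesssim\tau^{-(j-1)/2}\|u\|^{j}_{H^p(G_\tau)}$; taking $u$ constant and $j=2$ shows this is sharp. The weight exists precisely to give $\|u\|_{L^\infty(G_\tau)}\lesssim\|u\|_{E^p(\mathcal{Y}_t)}$ uniformly in $\tau$. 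The gain $\tau^{(j-1)/2}$ your computation produces is an artifact. It is also not $\tau^{1/2}$ as you wrote: for $j=1$ there is no gain at all.

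\textbf{The $S^2$ terms.} These are not handled identically to the $S^1$ ones. The $S^2$ part of $\|u\|_{\mathcal{E}^p(\mathcal{Y}_t)}$ contains only $[\partial_0^{k+1}u]^2$ for $0\le k\le p-1$, and not $[u]^2$. Yet for $r\ge2$, differentiating $[\partial_0\tilde f_u]^2$ tangentially produces terms such as $[(\partial_u\partial_0f)(x,u)]^2\,\partial_i[u]^2$. Your proposal gives no bound for these.

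By contrast, your weighted count on $S^1$, with $\partial_0$ of weight $2$, does close. A non-maximal factor $\partial^{m}[\partial_0^{k}u]^1$ has weight $2k+m\le r-1$. It therefore lies in $H^{2p-1-2k-m}$ on $\Sigma^1_\tau$ with $2p-1-2k-m\ge p>(N-1)/2$, which gives the $L^\infty$ bound you need on the $(N-1)$-dimensional slice.
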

\textbf{Proof:} It is done in a similar way to that of theorem 6.2.2.6 of \cite{hou}.
\begin{theorem}\label{36ss} Let $u=(u_{m})_{1\leq m\leq l}$,
$v=(v_{m})_{1\leq m\leq l}$ be elements
 of $\mathcal{E}^{p-1}(Y_{t})$; $Z$ an open of $\mathbb{R}^{l}$, $f$ an element of $C^{2r-3}_{b}(Y_{t}\times Z)$
  such as $\forall$ $i$ $\in$ $[1,l]\cap \mathbb{N}$, $D_{u_{i}}f$ $\in$ $C^{2r-3} _{b}(Y_{t}\times Z)$
  .\\We suppose that $2\leq r \leq p$, $\frac{N}{2}+1< p$ ; $u:$ $x\mapsto$ $u(x)=(u_{1}(x),...,u_{l}(x))$,
   $v:$ $x\mapsto$ $v(x)=(v_{1}(x),...,v_{l}(x))$ are maps of $Y_{t}$ in $Z $. \\We denote $\tilde{f}_{u}$: $x\mapsto$ $\tilde{f}_{u}(x)=f(x,u(x))$.\\Then $ \tilde{f}_{u}$ satisfies the following inequality:\\
a) Then we have:
\begin{equation}\label{37ss}
\parallel \tilde{f}_{u}-\tilde{f}_{u+v} \parallel_{\mathcal{E}^{r-1}(Y_{t})}\leq c_{t} (r,p) \left\{
                \begin{array}{ll} \max\limits_{1\leq i\leq l}\parallel D_{u_{i}}f \parallel_{ C^{2r-3}_{b}(Y_{t} \times Z)}\parallel v \parallel_{\mathcal{E}^{p-1}(Y_{t})}
      \hbox{} \\ \times[1+\parallel u
\parallel_{\mathcal{E}^{p-1}(Y_{t})}+\parallel v
\parallel_{\mathcal{E}^{p -1}(Y_{t})}]^{2r-3}.
       \end{array}
              \right.
\end{equation}
b) If in addition $w$ $\in$ $\mathcal{E}^{p-1}(Y_{t})$ then we have:
\begin{equation}\label{38ss}
\parallel (\tilde{f}_{u}-\tilde{f}_{u+v})w \parallel_{\mathcal{E}^{r-1}(Y_{t})}\leq c_ {t}(r,p) \left\{
                \begin{array}{ll} \max\limits_{1\leq i\leq l}\parallel D_{u_{i}}f \parallel_{ C^{2r-3}_{b}(Y_{t} \times Z)}\parallel v \parallel_{\mathcal{E}^{p-1}(Y_{t})}\times \parallel w \parallel_{\mathcal{E}^{p-1}(Y_ {t})}

     \hbox{} \\ \times[1+\parallel u \parallel_{\mathcal{E}^{p-1}(Y_{t})}+\parallel v \parallel_{\mathcal{E}^{p
  -1}(Y_{t})}]^{2r-3}.
       \end{array}
              \right.
\end{equation}
c) If we further suppose that $r+1\leq p$, $w$ $\in$
$\mathcal{E}^{p-2}(Y_{t})$ ,$f$ an element of
$C^{2r-1}_{b}(Y_{t}\times Z)$ such that $\forall$ $i$ $\in$
$[1,l]\cap \mathbb{N}$, $D_{u_{i}}f$ $\in$ $C^{2r-1}_{b}(Y_{t}\times
Z)$ then we have:
\begin{equation}\label{39ss}
\parallel (\tilde{f}_{u}-\tilde{f}_{u+v})w \parallel_{\mathcal{E}^{r-1}(Y_{t})}\leq c_ {t}(r,p) \left\{
                \begin{array}{ll} \max\limits_{1\leq i\leq l}\parallel D_{u_{i}}f \parallel_{ C^{2r-1}_{b}(Y_{t} \times Z)}\parallel v \parallel_{\mathcal{E}^{p-1}(Y_{t})}\times \parallel w \parallel_{\mathcal{E}^{p-2}(Y_ {t})}

     \hbox{} \\ \times[1+\parallel u \parallel_{\mathcal{E}^{p-1}(Y_{t})}+\parallel v \parallel_{\mathcal{E}^{p
  -1}(Y_{t})}]^{2r-1}.
       \end{array}
              \right.
\end{equation}
d) Under the same  respective assumptions and under the subsidiary
hypotheses that $v$ $\in$ $\mathcal{K}^{p-1}(Y_{t})$ on has the
analogues of the inequalities $(\ref{37ss}-\ref{39ss})$ to condition
to replace on the left $\mathcal{E}^{p-1}(Y_{t})$ by
$\mathcal{K}^{p-1}(Y_{t})$ and replace outside brackets in the right
member $\parallel v
\parallel_{\mathcal{E}^{p-1}(Y_{t})}$ by $\parallel v
\parallel_{\mathcal{K}^{p-1}(Y_{t})}$.
\end{theorem}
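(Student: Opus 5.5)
The plan is to reduce everything to the substitution estimate of Theorem \ref{33ss} through the mean value formula. For $x\in Y_{t}$ we write
\[
\tilde{f}_{u+v}(x)-\tilde{f}_{u}(x)=\sum_{i=1}^{l} v_{i}(x)\int_{0}^{1} (D_{u_{i}}f)(x,u(x)+\theta v(x))\,d\theta ,
\]
which is legitimate if we assume, as usual, that the segment $[u(x),u(x)+v(x)]$ stays in $Z$ (or that $Z$ is convex). Setting $F_{i}^{\theta}(x)=(D_{u_{i}}f)(x,u(x)+\theta v(x))$, each $F^{\theta}_{i}$ is a substituted function of the type treated in Theorem \ref{33ss}. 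We apply that theorem with $r-1$ in place of $r$ and $p-1$ in place of $p$, to the function $u+\theta v\in\mathcal{E}^{p-1}(Y_{t})$. This requires $1\leq r-1\leq p-1$ and $\frac{N}{2}<p-1$, which are exactly the hypotheses $2\leq r\leq p$ and $\frac N2+1<p$. It gives, uniformly in $\theta\in[0,1]$,
\[
\parallel F^{\theta}_{i}\parallel_{\mathcal{E}^{r-1}(Y_{t})}\leq c_{t}(r,p)\parallel D_{u_{i}}f\parallel_{C^{2r-3}_{b}}\,[1+\parallel u\parallel_{\mathcal{E}^{p-1}(Y_t)}+\parallel v\parallel_{\mathcal{E}^{p-1}(Y_t)}]^{2r-3}.
\]
The integral in $\theta$ is then handled by Minkowski's inequality for Bochner integrals, since the norm is a sup in $\tau$ of $L^2$ norms of derivatives.

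The key auxiliary step is a multiplicative (algebra) estimate in these weighted spaces:
\[
\parallel FG\parallel_{\mathcal{E}^{r-1}(Y_t)}\leq c\parallel F\parallel_{\mathcal{E}^{r-1}(Y_t)}\parallel G\parallel_{\mathcal{E}^{p-1}(Y_t)}
\]
for $r\leq p$ and $p-1>\frac N2$. For the $E^{p}$ part on each slice $G_\tau$ this is the classical Moser/Leibniz estimate in $H^{r-1}(G_\tau)$. It combines the Sobolev embedding $H^{p-1}\subset L^\infty$ with Gagliardo--Nirenberg interpolation for the intermediate derivatives. The weight $\tau^{-1/2}$ is harmless because only one factor carries it and the other is bounded by its sup norm without weight. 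For the boundary parts, on $S^{1}_t$ and $S^{2}_t$ the traces $[\partial_0^k(FG)]^w$ are expanded by Leibniz. Each term is estimated in $E^{2(r-1-k)-1}(S^1_t)$ or $E^{r-1-k-1}(S^2_t)$ by the same Sobolev product estimates in dimension $N$ on the hypersurfaces. The higher tangential regularity $2(p-k)-1$ on $S^1$ compensates for the loss from normal derivatives.

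With this product estimate, part a) follows by taking $F=F^\theta_i$ and $G=v_i$ and summing over $i$. Part b) follows by applying the product estimate once more with $w$, using that $\mathcal{E}^{p-1}$ is itself an algebra when $p-1>\frac N2$. Part c) uses the weaker hypothesis $w\in\mathcal{E}^{p-2}(Y_t)$. Here we instead put $w$ in the slot of regularity $r-1\leq p-2$ and must place the other factor $F^\theta_i v_i$ in a space controlling $L^\infty$ together with $r-1$ derivatives. We therefore estimate $F_i^\theta$ in $\mathcal{E}^{r}$ via Theorem \ref{33ss} with $r$ (now $r\leq p-1$), which costs $C^{2r-1}_b$ regularity of $D_{u_i}f$ and the exponent $2r-1$, exactly as stated. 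Part d) repeats the same argument, with the outermost factor $v$ measured in $\mathcal{K}^{p-1}$ (an $L^2$-in-$\tau$ norm) instead of $\mathcal{E}^{p-1}$. This uses $\mathcal{E}\hookrightarrow\mathcal{K}$ for the remaining factors and Hölder in $\tau$ (sup times $L^2$).

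The main obstacle is the product estimate on the characteristic part $S^1$, where the norms $\mathcal{E}^{p}(S^1_t,\mathcal{Y})$ mix transversal derivatives $\partial_0^k$ with tangential regularity $2(p-k)-1$. One must check that in each Leibniz term $[\partial_0^{a}F]^1[\partial_0^{b}G]^1$, with $a+b=k$, the available tangential orders $2(r-1-a)-1$ and $2(p-1-b)-1$ suffice. The aim is an $H^{2(r-1-k)-1}$ bound, to be obtained by putting the factor with more available regularity in $L^\infty$ via Sobolev on $N$-dimensional slices. I would first attempt this bookkeeping following Theorem 6.2.2.6 of \cite{hou}, and I expect the index counting with the hypothesis $p>\frac N2+1$ to be just barely sufficient.
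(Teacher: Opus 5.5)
Your proposal is correct and is essentially the argument the paper relies on: the paper gives no proof of its own and refers to Theorem 6.2.2.6 of \cite{hou}, and your route is the standard one behind that reference. That route combines the representation $\tilde{f}_{u+v}-\tilde{f}_{u}=\sum_{i}v_{i}\int_{0}^{1}(D_{u_{i}}f)(x,u+\theta v)\,d\theta$, the substitution estimate of Theorem~\ref{33ss} (with $(r-1,p-1)$ for a), b) and $(r,p-1)$ for c)), Sobolev product estimates on the slices and on $S^{1}$, $S^{2}$, and H\"older in $\tau$ for d). The segment/convexity assumption on $Z$ that you add is genuinely needed, since the statement as written only asks $v$, not $u+\theta v$, to take values in $Z$ and fails for disconnected $Z$.
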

\textbf{Proof:} It is done in a similar way to that of theorem 6.2.2.6 of \cite{hou}.
\section{ Local solution of the spatio-characteristic problem
second-order hyperbolic hyper-quasilinear}  Let
$\bar{L}=A^{\lambda\mu}(x^{\alpha},\bar{u}_{z}(x^{\alpha}))\partial^{2}_{\lambda\mu}$
an operator $x^{0}$-hyperbolic differential of the second order in $
\mathcal{Y}$.
 Consider the problem
hyper-quasilinear with spatio-characteristic initial data next:
\begin{equation}\label{6999tt}
\left\{
                \begin{array}{ll}
(G_{r})~:~A^{\lambda\mu
}(x^{\alpha},u_{z}(x^{\alpha}))\partial^{2}_{\lambda\mu}u_{r}+f_{r}(x^{\alpha},u_{z},\partial_{\nu}u_{z})=0
~~~in ~~~ \mathcal{Y}, \hbox{} \\ u_{r}=\bar{u}_{r}=\varphi_{r} ~~on
~~S^{1}~,~ u_{r}=h_{r},~~\partial_{0}u_{r}=g_{r}~~on ~~S^{2}\end{array}
              \right.
\end{equation}where
$\alpha,\lambda,\mu,\nu=0,...,N$,~~ $r,z=1,...,n$.\\
 We assume that:\\
$\bullet$ $\bar{u}$ is an application defined on $\mathcal{Y}$,
continues on secant $\Gamma$ such that $\bar{u}(\Gamma)\subset W$
and
$D\bar{u}(S^{w})\subset Z$ where $W$ is an open set of $\mathbb{R}^{n}$ and $Z$ an open set of $\mathbb{R} ^{n(N+1)}$;\\
$\bullet$ $S^{1}$ is a characteristic hypersurface of
$\mathbb{R}_{+}\times \mathbb{R}^{N}$ for operator
$\bar{L}=A^{\lambda\mu}(x^{\alpha},\bar{u}_{z}(x^{\alpha}))\partial^{2}_{\lambda\mu}$
and with equation $x^{0}=-x^{1}$.\\
$\bullet$ $S^{2}$ is a spatial hypersurface of
$\mathbb{R}_{+}\times\mathbb{R}_{+}\times \mathbb{R}^{N-1}$ for the
operator
$\bar{L}=A^{\lambda\mu}(x^{\alpha},\bar{u}_{z}(x^{\alpha}))\partial^{2}_{\lambda
\mu}$
and with equation $x^{0}=0$.\\
$\bullet$ $\Gamma=S^{1}\cap S^{2}$ and is spatial for $\bar{L}$.
\\ We
establish  the following result of  existence and uniqueness  of the second
order hyperquasilinear hyperbolic spatio-characteristic problems.
\begin{theorem} Let $s$ $\in$ $\mathbb{N}$, $s\geq6$, $T$
$\in$ $\mathbb{R}^{*}_{+}$. If the $A^{\lambda\mu}$ are functions of
class $C^{\infty}$ on
 $\mathcal{Y} \times W$, the $f_{r}$ are functions of class $C^{\infty}$ on $\mathcal{Y} \times W\times Z$ and the data
 initials $\varphi=(\varphi_{r})$, $h=(h_{r})$, $g=(g_{r})$ are such that:\\
(i) $\varphi$ $\in$ $\hat{E}^{2s-1}(S_{T}^{1})$ such that $\varphi\mid_{\Gamma}$ $\in$ $\hat{E}^{2s-2}(\Gamma,S^{2})$;
(ii) $h$ $\in$ $\hat{E}^{s}(S^{2})$, $g$ $\in$ $\hat{E}^{s-1}(S ^{2})$;\\
(iii) $\partial^{k}_{0}[u]^{1}= [\partial^{k}_{0}u]^{2}$ for all $k$ $\in$ $ [0,s]\cap \mathbb{N}$ on $\Gamma=S^{1}\cap S^{2}$.\\
So:\\(a) $\exists$ $T_{0}$ $\in$ $]0,T]$, such as in the domain $\mathcal{Y}_{T_{0}}$, the spatio-characteristic hyper-quasilinear problem $(\ref{6999tt})$ admits a unique solution $u=(u_{r})$ $\in$ $\hat{\mathcal{E}}^{s}(\mathcal{Y} _{T_{0}})$.\\
(b) If moreover the initial data $\varphi=(\varphi_{r})$ are
restrictions to $S^{1}$ of functions of class $C^{\infty}$ on
$\mathbb{R}^{2}\times B$, and the initial data $h=(h_{r})$,
$g=(g_{r})$ are restrictions to $S^{2}$ of class functions
$C^{\infty}$ on $\mathbb{R}^{2}\times B$, then $u=(u_{r})$ $\in$
$C^{\infty}(\mathcal{Y}_{T_{0}})$.\end{theorem}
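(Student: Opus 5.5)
The proof will be a fixed-point argument in the spirit of \cite{dt}, built on the linear theorem \ref{40ssytt} and the substitution lemmas \ref{33ss} and \ref{36ss}. Fix $\bar u$ as in the hypotheses; it extends the data and agrees with $\varphi$ on $S^{1}$ and with $(h,g)$ on $S^{2}$. Consider the set
$$X_{T_0,R}=\{v\in \hat{\mathcal{E}}^{s}(\mathcal{Y}_{T_0}) : v=\varphi \mbox{ on } S^{1},\ v=h,\ \partial_0 v=g \mbox{ on } S^{2},\ \parallel v-\bar u\parallel_{\mathcal{E}^{s}(\mathcal{Y}_{T_0})}\leq R\},$$
with $R$ small enough that $v(\mathcal{Y}_{T_0})\subset W$ and $Dv(\mathcal{Y}_{T_0})\subset Z$. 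This inclusion follows from the Sobolev embedding, since $s-1>N/2$ and $s\ge 6$. For $v\in X_{T_0,R}$, let $u=\Phi(v)$ be the solution of the linear spatio-characteristic problem
$$A^{\lambda\mu}(x,v)\partial^2_{\lambda\mu}u_r=-f_r(x,v,Dv)$$
with the same data. By Lemma \ref{33ss}, applied with $f$ smooth and $r=s$ or $s-1$, the coefficients $A^{\lambda\mu}(\cdot,v)$ lie in $\mathcal{K}^{s}$ and the source $f(\cdot,v,Dv)$ lies in $\mathcal{K}^{s-1}$. The bounds are polynomial in $\parallel v\parallel_{\mathcal{E}^{s}}$, hence depend only on $R$ and $\bar u$.

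Hyperbolicity of $A^{\lambda\mu}(x,v)$ near $\Gamma$ and for small $x^0$ follows from continuity and the hyperbolicity of $\bar L$, with a constant close to that of $\bar L$. The compatibility conditions (iii) are inherited because $v$ and the data agree on $S$. The traces $[\partial_0^k v]^w$ are determined by the data and the equation through the usual recursive computation on $S^1$ and $S^2$. Hence the $\Gamma$-terms $\tilde{\tilde\gamma}^{\Gamma}_s$ and $\parallel f\parallel_{\mathcal{K}^{s-1}(\Gamma,S^w,Y)}$ are fixed by the data and do not depend on $v$.

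Part 2) of Theorem \ref{40ssytt} then gives $u\in\hat{\mathcal{E}}^{s}(\mathcal{Y}_{T_0})$ with an a priori bound. To see that $\Phi$ maps $X_{T_0,R}$ into itself, apply the same linear estimate to $u-\bar u$ and exploit the smallness factor $t^{1/2}$ in front of $\parallel f\parallel_{\mathcal{K}^{s-1}}$. The data part of $u-\bar u$ vanishes, and the residual $\bar L\bar u + f(\cdot,\bar u,D\bar u)$ can be made small in $\mathcal{K}^{s-1}(\mathcal{Y}_{t})$ as $t\to0$. This uses the weight $\tau^{-1}$ together with the fact that its traces match those of the exact solution's formal Taylor expansion on $S$. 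Choosing $T_0$ small then closes the self-map estimate.

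For the contraction, subtract the equations for $u=\Phi(v)$ and $u'=\Phi(v')$:
$$A(x,v)\partial^2(u-u')=-[A(x,v)-A(x,v')]\partial^2u'-[f(x,v,Dv)-f(x,v',Dv')],$$
with zero data. Estimate the difference in the lower norm $\mathcal{E}^{s-1}$ using Theorem \ref{40ssytt} with $s-1$ in place of $s$, which is allowed since $s-1>N/2+1$. The right-hand side is handled by Lemma \ref{36ss}: part c) applies to the product with $\partial^2u'\in\mathcal{E}^{s-2}$, part a) to the $f$-difference, and part d) gives the $\mathcal{K}$ versions. The result is
$$\parallel u-u'\parallel_{\mathcal{E}^{s-1}}\le C(R)\,T_0^{1/2}\parallel v-v'\parallel_{\mathcal{E}^{s-1}},$$
which is a contraction for small $T_0$. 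Since $X_{T_0,R}$ is closed in the $\mathcal{E}^{s-1}$ topology (bounded sets in $\hat{\mathcal{E}}^{s}$ are weak-star compact), the fixed point exists and lies in $\hat{\mathcal{E}}^{s}$, which proves (a). Uniqueness follows from the same difference estimate applied to two solutions, together with a Gronwall-type argument in $t$.

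For (b), apply (a) for every $s'\ge s$. The key is that the existence time does not shrink with $s'$. This comes from a Moser-type argument: the $\mathcal{E}^{s'}$ estimate is linear in the top norm, with coefficients controlled by the $\mathcal{E}^{s}$ norm, which is already bounded on $[0,T_0]$. Uniqueness identifies the solutions for different $s'$, and the Sobolev embedding then gives $u\in C^\infty$.

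The main obstacle I expect is the self-map step. One must show that the $\Gamma$- and $S^w$-trace norms appearing in Theorem \ref{40ssytt} are independent of the iterate and are controlled by the data. One must also produce genuine smallness as $T_0\to0$ despite the $\tau^{-1/2}$ weights. I would first try to absorb everything into the factor $t^{1/2}$ of the linear estimate, after subtracting a good approximate solution $\bar u$ built from the data.
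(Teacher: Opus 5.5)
There is a genuine gap in your self-map step. You centre the ball at $\bar u$ with a small radius in the top norm and apply Theorem \ref{40ssytt} at level $s$ to $u-\bar u$. That function solves $A^{\lambda\mu}(x,v)\partial^{2}_{\lambda\mu}(u-\bar u)=-f(x,v,Dv)-A^{\lambda\mu}(x,v)\partial^{2}_{\lambda\mu}\bar u$ with zero data, and the level-$s$ estimate needs this source in $\mathcal{K}^{s-1}$. That means $\partial^{2}\bar u\in\mathcal{K}^{s-1}$, one derivative more than $\bar u\in\hat{\mathcal{E}}^{s}$ provides; the product is only at level $s-2$. With $\varphi\in\hat E^{2s-1}$, $h\in\hat E^{s}$, $g\in\hat E^{s-1}$, no extension carrying these exact traces can be smoother. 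Moreover, the $\bar u$ of the hypotheses is only continuous on $\Gamma$, so a reference function must be constructed, as the paper does with $\square_3u^{0}=0$ in (\ref{715t}). Uniform-in-$v$ top-order smallness of $u-\bar u$ therefore cannot be obtained this way. The paper instead takes the large ball $\parallel u\parallel_{\mathcal{E}^{s}}\le R$, with $R$ at least $2c_1$ times the data norms, and estimates $u$ itself. The factor $t^{1/2}(1+R)^{2s-3}$ then closes (\ref{716t}) without any second derivative of a reference function appearing. The repair is to keep that large top-order ball and impose closeness to $u^{0}$ only in a lower norm, e.g. $\parallel v-u^{0}\parallel_{\mathcal{E}^{s-1}}\le\delta$. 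This condition is propagated by the level-$(s-1)$ estimate, which needs the source only in $\mathcal{K}^{s-2}$. It still keeps $(v,Dv)$ inside $W\times Z$, because $s-1>\frac{N}{2}+1$.

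Second, you claim the $\Gamma$-terms of (\ref{4101ksiiisiiiut}) are fixed by the data for every $v\in X_{T_0,R}$, but this fails for the set you defined. Only $v|_{S^1}$, $v|_{S^2}$ and $\partial_0v|_{S^2}$ are prescribed, so the traces $[\partial_0^{k}v]^{w}|_{\Gamma}$ for $k\ge 2$ are free. Both $\tilde{\tilde\gamma}^{\Gamma}_{s}$ and $\parallel f(\cdot,v,Dv)\parallel_{\mathcal{K}^{s-1}(\Gamma,S^{w},Y)}$ depend on these traces, and the recursive computation determines them only for solutions. These terms carry no factor $t^{1/2}$. They must therefore be independent of $v$ for the self-map to close, and they must vanish exactly for the difference source in the contraction. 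You have to build the formal traces, computed from the data using (iii), into the definition of the set and check that the map preserves them. The paper skips this as well: it bounds these terms by the bulk norm in (\ref{708t}) and then drops them in (\ref{709t}).

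Granting these repairs, your contraction in $\mathcal{E}^{s-1}$, with closedness of the top-order ball in the weaker topology, departs from the paper, which contracts in $\mathcal{E}^{s}$. Yours is the sound choice. The paper's (\ref{bxttsdd}) places $[\tilde A_{V^{2}}-\tilde A_{V^{1}}]\partial^{2}u^{2}$ in $\mathcal{K}^{s-1}$ although $\partial^{2}u^{2}\in\mathcal{E}^{s-2}$. Theorem \ref{36ss} c) does not give this; it yields only level $s-2$ for such a product.

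Finally, the paper gives no proof of (b), and your Moser argument presupposes a tame linear estimate that is not available here. The constant in Theorem \ref{40ssytt} at level $s'$ depends on $\tilde{\tilde\gamma}^{Y_t}_{s'}$, hence on $\parallel A(\cdot,u)\parallel_{\mathcal{K}^{s'}}$. So the estimate is not linear in the top norm with coefficients controlled by the $\mathcal{E}^{s}$ norm, unless the linear theory is reproved in tame form.
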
 \textbf{Proof.} The
proof is done in four steps.
 We consider the application $F$:
$\hat{\mathcal{E}}^{s}(\mathcal{Y}_{T})\rightarrow
\hat{\mathcal{E}}^{s}(\mathcal{Y}_{T})$; $V=(V_{l})\mapsto (u_{r}) $
where $(u_{r})$ is solution of the  following spatio-characteristic linear problem
:
\begin{equation}\label{7000tt}
A^{\lambda\mu }(x^{\alpha},V_{l})\partial^{2}_{\lambda\mu}u_{r}+f_{r}(x^{\alpha},V_{l},\partial_{\nu}V_{l})=0 ~~in ~~ \mathcal{Y}_{T,}
~~ u_{r}=\bar{u}_{r}=\varphi_{r} ~~on ~~S^{1}~~  u_{r}=h_{r},~~\partial_{0}u_{r}=g_{r}~~on ~~S^{2}.
\end{equation}
At the first step, to ensure that this ball is non-empty, it is important to construct on $\mathcal{Y}_{T}$ a function $u^{0}$ such that $u^{0}$ $\in$ $\hat{\mathcal{E}}^{s}(\mathcal{Y}_{T})$ and $u^{0}=\bar{u}^{0}=\varphi^{w} $ on $S^{w}$. We construct an
element $u^{0}$ $\in$ $\hat{\mathcal{E}}^{s}(\mathcal{Y}_{T})$
solution of the  following  spatio-characteristic  linear problem:

\begin{equation}\label{715t}
\square_{3} u_{r}^{0}=0 ~~~in ~~~ \mathcal{Y}_{T}~~;~~
 u_{r}^{0}=\bar{u}_{r}^{0}=\varphi^{w}_{r} ~~on ~~S^{1}_{T}; ~~ u_{r}^{0}=h_{r},~~\partial_{0}u_{r}^{0}=g_{r}~~on ~~S^{2}_{T}.
\end{equation}It follows from theorem 8.1 of \cite{ms} that the previous spatio-characteristic linear admits a unique solution  $u^{0}=(u^{0}_{r})$  $\in$ $\hat{\mathcal{E}}^{s}(\mathcal{Y}_{T})$.At the second step, using Sobolev inequalities and substitution lemmas, we show that the map $F$ is well defined. Suppose that $A^{\lambda\mu}$ and $f_{r}$ are functions of class $C^{\infty}$ of their arguments. Since
  $\Gamma$ is a compact subset of $S^{w}$, it follows that $\bar{u}(\Gamma)$ is a subset
   compact of $W$ as an image of a compact part by a continuous application. As $W$ is a subset
    open of $\mathbb{R}^{n}$, there exists a positive real number $c$ such that: $\bar{V}_{c}=\bar{u}(\Gamma)+\bar{ B}_{c}\subset W$
     with $B_{c}=\lbrace u \in \mathbb{R}^{n}: \mid u\mid< c\rbrace$ and $\bar{B}_{c}=\lbrace u \in \mathbb{R}^{n}: \mid u\mid\leq c\rbrace$ where
      $\mid u\mid$ represents the Euclidean norm of $u$ in $ \mathbb{R}^{n}$. Likewise, since $\Gamma$ is a compact subset
      contained in $S^{w}$, it follows that $D\bar{u}(\Gamma)$ is a compact subset contained in $Z$ and as $Z$ is an open subset of $\mathbb{R}^{4n}$, there exists a positive real number $c'$ such that: $\bar{V}_{c'}=D\bar{u}(\Gamma)+\bar{B }_{c'}\subset Z$ with $B_{c'}=\lbrace Du \in \mathbb{R}^{4n}: \mid Du\mid< c'\rbrace$ and $\bar{B }_{c'}=\lbrace Du \in \mathbb{R}^{4n}: \mid Du\mid\leq c'\rbrace$ where $\mid Du\mid$ represents the Euclidean norm of $Du$ in $ \mathbb{R}^{4n}$. In order to show that the map $F$ is well defined, we must define the hyperbolicity constant $h$ which will allow us to use the theorem $\ref{40ssytt}$. Let $\gamma^{ij}(x^{\alpha},\bar{u}_{s})$ be the positive definite metric associated with the hyperbolic operator $\bar{L}$ defined as follows: \begin {center}
 $\gamma^{ij}(x^{\alpha},\bar{u}_{s})=-A^{ij}(x^{\alpha},\bar{u}_{s}) $, $\gamma^{00}(x^{\alpha},\bar{u}_{s})=A^{00}(x^{\alpha},\bar{u}_{s} )$ and
  $\gamma^{0i}(x^{\alpha},\bar{u}_{s})=0$. \end{center} We set $S^{3}$ to be the unit sphere of $\mathbb{R}^{4}$ and we define the constants $h_{1}$, $h_{2}$ and $h_ {3}$ as follows: \begin{center}
 $\gamma^{00}(x^{\alpha},\bar{u}_{s})> h_{1}$,

 \end{center}
\begin{center}
 $h_{2}=\inf\limits_{(x^{\alpha}, \bar{u}_{s},X_{i}) \in \mathcal{Y}_{T}\times \bar{ V}_{c}\times S^{3}}\gamma^{ij}(x^{\alpha},\bar{u}_{s})X_{i}X_{j}$,

 \end{center} and
  \begin{center}
 $h_{3}=\sup\limits_{(x^{\alpha}, \bar{u}_{s},X_{i}) \in \mathcal{Y}_{T}\times \bar{ V}_{c}\times S^{3}}\gamma^{ij}(x^{\alpha},\bar{u}_{s})X_{i}X_{j}$
 \end{center}
  Since $ \mathcal{Y}_{T}\times \bar{V}_{c}$ and $S^{3}$ are compacts then $h_{1}$, $h_{2}$ and $ h_{3}$ are positive constants.\\
Taking $h=\max\lbrace h_{1}^{-1}, h_{2}^{-1}, h_{3}\rbrace$
 it follows that $h$ is a hyperbolicity constant for the hyperbolic operator $A^{\lambda\mu}(x^\alpha,V_{l}(x^\alpha))\partial^{2}_ {\lambda\mu}$ this
 for any continuous function $V_{l}$: $Y_{T}\rightarrow \bar{V}_{c}$. The operator $A^{\lambda\mu}(x^\alpha,V_{l}(x^\alpha))\partial^{2}_{\lambda\mu}$ being regularly hyperbolic with constant  hyperbolicity $h$ by virtue of theorem $\ref{40ssytt}$, it suffices to show that for $V$ $\in$ $\hat{\mathcal{E}}^{s}(\mathcal{Y}_ {T})$ , the applications $\tilde{A}_{V}$: $(x^\alpha)$ $\mapsto$ $A^{\lambda\mu }(x^{\alpha},V_ {l})$ and $\tilde{f}_{V}$: $(x^\alpha)$ $\mapsto$ $f_{r}(x^{\alpha},V_{l},\partial_ {\nu}V_{l})$ belong respectively to the spaces $\mathcal{K}^{s}(\mathcal{Y}_{T})$ and $\mathcal{K}^{s-1}( \mathcal{Y}_{T})$.\\
Since $s>\frac{3}{2}+1$, we have $\mathcal{E}^{s}(\mathcal{Y}_{T})
\hookrightarrow C^{1}(\mathcal{Y}_{T})$ .
\begin{equation}\label{701h}
\left\{
                \begin{array}{ll}
 \parallel V\parallel_{C^{0}(\mathcal{Y}_{T})}\leq \epsilon \parallel V\parallel_{\mathcal{E}^{s}(\mathcal{Y}_{ T})}, \hbox{} \\
 \parallel DV\parallel_{C^{0}(\mathcal{Y}_{T})}\leq \epsilon \parallel V\parallel_{\mathcal{E}^{s}(\mathcal{Y}_{ T})} .
                \end{array}
              \right.
\end{equation}
by setting $r= \frac{1 }{\epsilon} \min( c,c')>0$ we have for
$\parallel V \parallel_{\mathcal{E}^{s}(\mathcal{Y}_{t})}\leq r$, $(V(x^{\alpha}),DV(x^{ \alpha})) \in V_{c}\times V_{c'}$,~$\forall$~ $(x^{\alpha})~\in~\mathcal{Y}_{T}$.
 \\
Considering the inequality $(\ref{34ss})$ in which we take respectively: $1\leq r=p=s-1$, we have:\begin{equation}\label{702}
\parallel \tilde{f}_{V}\parallel_{\mathcal{K}^{s-1}(\mathcal{Y}_{T})}\leq c^{1}_{T}(s )\parallel f\parallel_{C^{2s-3}_{b}(\mathcal{Y}_{T}\times \bar{V}_{c}\times \bar{V}_{c' })}\times[1
+\parallel V\parallel_{\mathcal{E}^{s}(\mathcal{Y}_{T})}]^{2s-3}
\end{equation} and
$1\leq r=p=s$, we have:\begin{equation}\label{723}
\parallel \tilde{A}_{V}\parallel_{\mathcal{K}^{s}(\mathcal{Y}_{T})}\leq c^{1}_{T}(s)\parallel A^{\lambda\mu} \parallel_{C^{2s-1}_{b}(\mathcal{Y}_{T}\times \bar{V}_{c})}\times [1
+\parallel V\parallel_{\mathcal{E}^{s}(\mathcal{Y}_{T})}]^{2s-1}.
\end{equation}
We then deduce respectively from the relations $(\ref{702})$ and $(\ref{723})$ that as soon as $V$ $\in$ $\hat{\mathcal{E}}^{s}( \mathcal{Y}_{T})$ we have $\tilde{f}_{V}$ $\in$ $\mathcal{K}^{s-1}(\mathcal{Y}_{T} )$ and $\tilde{A}_{V}$ $\in$ $\mathcal{K}^{s}(\mathcal{Y}_{T})$.
\\Hence $F$ is well defined as the application of $\hat{\mathcal{E}}^{s}(\mathcal{Y}_{T})$ into himself.\\Since $u=(u_{r})$ is solution to the linear problem $(\ref{7000tt})$, we deduce from inegality $(\ref{4101ksiiisiiiut})$ of  theorem $\ref{40ssytt}$  following inequality: $\forall$ $t$ $\in$ $]0,T]$,

\begin{equation}\label{703}
\parallel u \parallel_{\mathcal{E}^{s}(\mathcal{Y}_{t})}\leq c [ \parallel \varphi \parallel_{E^{2s-1}(S_{t} ^{1})} +
 \parallel \varphi \parallel_{E^{2s-2}(\Gamma,S^{2})} +\parallel h \parallel_{E^{s}(S^{2}_{t})}
 +\parallel g \parallel_{E^{s-1}(S^{2}_{t})}+t^{\frac{1}{2}}\parallel \tilde{f}_{V} \parallel_{\mathcal{K}^{s-1}(\mathcal{Y}_{t})}].
\end{equation}
Hence by combining the relations $(\ref{702})$ and $(\ref{703})$ we have:

\begin{equation}\label{704}\begin{aligned}
\parallel u \parallel_{\mathcal{E}^{s}(\mathcal{Y}_{t})}&\leq & c_{1} [ \parallel \varphi \parallel_{E^{2s-1} (S_{t}^{1})}
+ \parallel \varphi \parallel_{E^{2s-2}(\Gamma,S^{2})} +\parallel h
\parallel_{E^{s}(S^{2}_{t})} +\parallel g
\parallel_{E^{s-1}(S^{2}_{t})}\\ & &+t^{\frac{1}{2}}\parallel
f\parallel_{C^{2s-3}_{b}(\mathcal{Y}_{T}\times \bar{V}_{c}\times
\bar{V}_{c'})} [1 +\parallel
V\parallel_{\mathcal{E}^{s}(\mathcal{Y}_{t})}]^{2s-3}].
\end{aligned}
\end{equation}
$c_{1}$ being a strictly positive constant independent of $t$.
At the third step We show that $F$ is a contraction of a ball
of $\hat{\mathcal{E}}^{s}(\mathcal{Y}_{T})$ in itself. Let $V^{1}$, $V^{2}$ $\in$ $\mathcal{E}^{s}(\mathcal{Y}_{T})$ such that $u^{1} =F(V^{1})$ and $u^{2}=F(V^{2})$.\\By definition of the map $F$ , it follows that $u^{1}- u^{2}$ satisfies the following  spatio-characteristic lineair problem:
\begin{equation}\label{705tt}
\left\{
                \begin{array}{ll}
\tilde{A}_{V^{1}}\partial^{2}_{\lambda\mu}( u^{1}_{r}-u^{2}_{r})=[\tilde{A}_{V^{2}}-\tilde{A}_{V^{1}}]\partial^{2}_{\lambda\mu}u^{2}_{r}+ \tilde{f}_{V^{1}}-\tilde{f}_{V^{2}} ~~~in ~~~ \mathcal{Y}_{T}\hbox{} \\
u^{1}_{r}-u^{2}_{r}=0 ~~on ~~S_{T}^{1}~;~u^{1}_{r}-u^{ 2}_{r}=0,~~\partial_{0}(u^{1}_{r}-u^{2}_{r})=0 ~~on ~~S_{T}^{ 2}
                \end{array}
              \right.
\end{equation}
Since $u^{1}-u^{2}$ is a solution of the spatio-characteristic linear problem $(\ref{705tt})$, we deduce from inegality $(\ref{4101ksiiisiiiut})$ of  theorem $\ref{40ssytt}$, $\forall$ $t$ $\in$ $]0,T]$, by setting
\begin{equation}\label{706t}
P=[\tilde{A}_{V^{2}}-\tilde{A}_{V^{1}}]\partial^{2}_{\lambda\mu}u^{2}_ {r}+\tilde{f}_{V^{1}}-\tilde{f}_{V^{2}}
\end{equation}
that $\forall$ $t$ $\in$ $]0,T]$,
\begin{equation}\label{707tt}
\parallel u^{1}-u^{2} \parallel_{\mathcal{E}^{s}(\mathcal{Y}_{t})}\leq c_{T} [ t^{\frac{ 1}{2}}\parallel
P\parallel_{\mathcal{K}^{s-1}(\mathcal{Y}_{t})}+\sum\limits_{w=1}^{2}
\parallel
P\parallel_{\mathcal{K}^{s-1}(\Gamma,S^{w},\mathcal{Y})}].
\end{equation}
Based on the fact that:
\begin{equation}\label{708t}
\parallel P\parallel_{\mathcal{K}^{s-1}(\Gamma,S^{w},\mathcal{Y})}\leq \parallel P\parallel_{\mathcal{K}^{s -1}(\mathcal{Y}_{t})},
\end{equation}
The inequality $(\ref{707tt})$ deviates:
\begin{equation}\label{709t}
\parallel u^{1}-u^{2} \parallel_{\mathcal{E}^{s}(\mathcal{Y}_{t})}\leq c_{T}(s) t^{\frac{1}{2}}\parallel P\parallel_{\mathcal{K}^{s-1}(\mathcal{Y}_{t})}.
\end{equation}
 So using the triangular inequality of the norm $\parallel . \parallel_{\mathcal{K}^{s-1}(\mathcal{Y}_{t})}$, we have:
\begin{equation}\label{710t}
\parallel P \parallel_{\mathcal{K}^{s-1}(\mathcal{Y}_{t})}\leq \parallel [\tilde{A}_{V^{2}}-\tilde {A}_{V^{1}}]\partial^{2}_{\lambda\mu}u^{2}_{r} \parallel_{\mathcal{K}^{s-1}(\mathcal{Y}_{t})}+\parallel \tilde{f}_{V^{1}}-\tilde{f}_{V^{2}} \parallel_{\mathcal{K}^{ s-1}(\mathcal{Y}_{t})}.
\end{equation}
According to an inequality analogous to the inequality $(\ref{37ss})$ of theorem $\ref{36ss}$ $d)$ where we take $2\leq r=p=s$, we have: $\forall$ $t$ $\in$ $]0,T]$,
\begin{equation}\label{bxttsd}
\begin{aligned}
\parallel \tilde{f}_{V^{1}}-\tilde{f}_{V^{2}} \parallel_{\mathcal{K}^{s-1}(\mathcal{Y}_ {t})} &\leq & c_{T}(s)\max\limits_{1\leq i\leq 5n} \parallel D_{u_{i}}f\parallel_{C^{2s-3}_ {b}(\mathcal{Y}_{t}\times \bar{V}_{c}\times \bar{V}_{c'})} \times \parallel V^{1}-V^ {2} \parallel_{\mathcal{K}^{s}(\mathcal{Y}_{t})}\\
                                 & &\times [1
+\parallel V^{1}\parallel_{\mathcal{E}^{s}(\mathcal{Y}_{t})}+\parallel V^{2}\parallel_{\mathcal{E}^{ s}(\mathcal{Y}_{t})}]^{2s-3} .
\end{aligned}
\end{equation}Since $u^{2}$ $\in$ $\mathcal{E}^{s}(\mathcal{Y}_{t})$ then $\partial^{2}_{\lambda\mu}u ^{2}$ $\in$ $\mathcal{E}^{s-2}(\mathcal{Y}_{t})$ thus, according to an inequality analogous to the inequality $(\ref{37ss})$ of theorem $\ref{36ss}$ $d)$ where we take $2\leq r=p=s$, we have: $\forall$ $t$ $\in$ $]0,T] $,
\begin{equation}\label{bxttsdd}
\begin{aligned}
\parallel [\tilde{A}_{V^{2}}-\tilde{A}_{V^{1}}] \partial^{2}_{\lambda\mu}u^{2}_ {r} \parallel_{\mathcal{K}^{s-1}(\mathcal{Y}_{t})} &\leq & c_{T}(s)\max\limits_{1\leq i\leq n} \parallel D_{u_{i}}A^{\lambda\mu}\parallel_{C^{2s-3}_{b}(\mathcal{Y}_{t}\times \bar{V }_{c})}\times \parallel V^{1}-V^{2} \parallel_{\mathcal{K}^{s}(\mathcal{Y}_{t})} \\ & & \times[1
+\parallel V^{1}\parallel_{\mathcal{E}^{s}(\mathcal{Y}_{t})}+\parallel V^{2}\parallel_{\mathcal{E}^{ s}(\mathcal{Y}_{t})}]^{2s-3} \times \parallel u^{2}\parallel_{\mathcal{E}^{s}(\mathcal{Y}_{ t})}
\end{aligned}
\end{equation}
Using the relation $(\ref{704})$ for $u^{2}$ in this
last inequality we have:
\begin{equation}\label{bxttsddd}
\begin{aligned}
\parallel [\tilde{A}_{V^{2}}-\tilde{A}_{V^{1}}]\partial^{2}_{\lambda\mu}u^{2}_ {r} \parallel_{\mathcal{K}^{s-1}(\mathcal{Y}_{t})} &\leq & c_{T}(s)\max\limits_{1\leq i\ leq n} \parallel D_{u_{i}}A^{\lambda\mu}\parallel_{C^{2s-3}_{b}(\mathcal{Y}_{t}\times \bar{V }_{c})}\times \parallel V^{1}-V^{2} \parallel_{\mathcal{K}^{s}(\mathcal{Y}_{t})} \\ & & \times[1
+\parallel V^{1}\parallel_{\mathcal{E}^{s}(\mathcal{Y}_{t})}+\parallel V^{2}\parallel_{\mathcal{E}^{ s}(\mathcal{Y}_{t})}]^{2s-3}\\ && \times c_{1} [\parallel \varphi \parallel_{E^{2s-1}(S_{t} ^{1})} + \parallel \varphi \parallel_{E^{2s-2}(\Gamma,S^{2})} +\parallel h \parallel_{E^{s}(S^{2} _{t})} +\parallel g \parallel_{E^{s-1}(S^{2}_{t})} \\ & &+t^{\frac{1}{2}} [ 1
+\parallel V^{2} \parallel_{\mathcal{E}^{s}(\mathcal{Y}_{t})}]^{2s-3}]
\end{aligned}
\end{equation}
By combining the relations $(\ref{bxttsd})$ and $(\ref{bxttsddd})$
in $(\ref{709t})$, the relation $(\ref{709t})$ becomes $\forall$
$t$ $\in$ $]0,T]$:
\begin{equation} \label{748t}
\parallel u^{1}-u^{2} \parallel_{\mathcal{E}^{s}(\mathcal{Y}_{t})} \leq C_{t,V^{1},V ^{2},\max\limits_{1\leq i \leq n}\parallel D_{u_{i}}A^{\lambda\mu } \parallel, \max\limits_{1\leq i\leq 5n }\parallel D_{u_{i}}f\parallel } \times t^{\frac{1}{2}} \times \parallel V^{1}-V^{2} \parallel_{\mathcal{K }^{s}(\mathcal{Y}_{t})}.
\end{equation}
We consider $B^{s}_{R,t}$ the closed part of $\hat{\mathcal{E}}^{s}(\mathcal{Y}_{t})$ defined by:
 \begin{center}
$B^{s}_{R,t}=\lbrace u=(u_{r}) \in \hat{\mathcal{E}}^{s}(\mathcal{Y}_{t}): u_{r}=\bar{u}_{r}=\varphi_{r}~on~S_{t}^{1},~ u_{r}=h_{r},~~\partial_{0} u_{r}=g_{r} ~~on ~~S_{t}^{2},~\parallel u\parallel_{\mathcal{E}^{s}(\mathcal{Y}_{t}) }\leq R \rbrace$
\end{center}
where \begin{center}$R=\max\lbrace \parallel u^{0} \parallel_{\mathcal{E}^{s}(\mathcal{Y}_{t})} , 2c_{1}( \parallel \varphi \parallel_{E^{2s-1}(S_{t}^{1})} + \parallel \varphi \parallel_{E^{2s-2}(\Gamma,S^{2}) } +\parallel h \parallel_{E^{s}(S^{2}_{t})} +\parallel g \parallel_{E^{s-1}(S^{2}_{t}) }) \rbrace$
\end{center}
$c_{1}$ is the constant of the inequality $(\ref{704})$, for which $ u_{r}=\bar{u}_{r}=\varphi_{r}$~sur~ $S^{1}$,~ $u_{r}=h$,~~$\partial_{0}u_{r}=g$ ~~on ~~$S^{2}$ and $u^{ 0}$ solution of the problem $(\ref{715t})$ to ensure that $B^{s}_{R,t}$ is non-empty.
Let $T_{1}$ $\in$ $]0,T]$ such that:
\begin{equation}\label{763t}
c_{1}T_{1}^{\frac{1}{2}} [1
+R ]^{2s-3} < \frac{R}{2}.
\end{equation}
Thus using the relation $(\ref{763t})$, we deduce that the following relation is verified for all $t$ $\in$ $]0,T_{1}]$:
\begin{equation}\label{716t}
\parallel V \parallel_{\mathcal{E}^{s}(\mathcal{Y}_{t})}\leq R \Longrightarrow \parallel u\parallel_{\mathcal{E}^{s}(\mathcal {Y}_{t})}\leq R.
\end{equation}
Let $T_{0}$ $\in$ $]0,T_{1}]$ such that:
\begin{equation}\label{717t}
C_{T_{0},R,\max\limits_{1\leq i \leq n}\parallel D_{u_{i}}A^{\lambda\mu } \parallel, \max\limits_{1\leq i\leq 5n}\parallel D_{u_{i}}f\parallel } T_{0}^{\frac{1}{2}} < \frac{1}{2}
\end{equation} We therefore deduce from Banach's fixed point theorem that there exists a vector function $u$ $\in$ $\hat{\mathcal{E}}^{s}(\mathcal{Y}_{T_{0} })$ such that $u=F(u)$.  At the fourth step, let $u^{1}$, $u^{2}$ $\in$
$\mathcal{E}^{s}(\mathcal{Y}_{T_{0}})$ such that $u^{1}$ and $u^{2}$
verify the hyper-quasilinear spatio-characteristic problem
$(\ref{6999tt})$. Then $ u^{1}-u^{2}$ is solution following  spatio-characteristic linear problem:
\begin{equation}\label{7050t}\left\{
                \begin{array}{ll}
\tilde{A}_{u^{1}}\partial^{2}_{\lambda\mu}( u^{1}_{r}-u^{2}_{r})+Q=
0 ~~~in ~~~ \mathcal{Y}_{T_{0}} \hbox{} \\ u^{1}_{r}-u^{2}_{r}=0 ~~on
~~S_{T}^{1} ~~;~~
u^{1}_{r}-u^{2}_{r}=0,~~\partial_{0}(u^{1}_{r}-u^{2}_{r}) =0 ~~on
~~S_{T}^{2}\end{array}
              \right.
\end{equation} with:
\begin{center}
$Q=[\tilde{A}_{u^{2}}-\tilde{A}_{u^{1}}]\partial^{2}_{\lambda\mu}u^{2}
_{r}+\tilde{f}_{u^{1}}-\tilde{f}_{u^{2}}$.
\end{center}
Thus the solution $u^{1}-u^{2}$ of the spatio-characteristic linear problem $(\ref{7050t})$, verifies the energy inequality $(\ref{4101ksiiisiiiut})$  that $\forall$ $t$ $]0,T_{0}]$,
\begin{equation}\label{7070}
\parallel u^{1}-u^{2} \parallel_{\mathcal{E}^{s}(\mathcal{Y}_{t})}\leq c_{T} [ t^{\frac{ 1}{2}}\parallel Q
\parallel_{\mathcal{K}^{s-1}(\mathcal{Y}_{t})}+\sum\limits_{w=1}^{2} \parallel Q\parallel_{\mathcal{K }^{s-1}(\Gamma,S^{w},\mathcal{Y})}].
\end{equation}
Based on the fact that:
\begin{equation}\label{7080}
\parallel Q \parallel_{\mathcal{K}^{s-1}(\Gamma,S^{w},\mathcal{Y})}\leq \parallel Q\parallel_{\mathcal{K}^{s -1}(\mathcal{Y}_{t})},
\end{equation}
The inequality $(\ref{7070})$ deviates:
\begin{equation}\label{7090}
\parallel u^{1}-u^{2} \parallel_{\mathcal{E}^{s}(\mathcal{Y}_{t})}\leq C_{1} t^{\frac{1 }{2}}\parallel Q\parallel_{\mathcal{K}^{s-1}(\mathcal{Y}_{t})}.
\end{equation}
 Using the standard triangle inequality $\parallel . \parallel_{\mathcal{K}^{s-1}(\mathcal{Y}_{t})}$ we have:
\begin{equation}\label{71000}
\parallel Q \parallel_{\mathcal{K}^{s-1}(\mathcal{Y}_{t})}\leq \parallel [\tilde{A}_{u^{2}}-\tilde {A}_{u^{1}}]\partial^{2}_{\lambda\mu}u^{2}_{r} \parallel_{\mathcal{K}^{s-1}(\mathcal{Y}_{t})}+\parallel \tilde{f}_{u^{1}}-\tilde{f}_{u^{2}} \parallel_{\mathcal{K}^{ s-1}(\mathcal{Y}_{t})}.
\end{equation}According to an inequality analogous to the inequality $(\ref{37ss})$ of theorem $\ref{36ss}$ $d)$ where we take $2\leq r=p=s$, we have: $\forall$ $t$ $\in$ $]0,T_{0}]$,
\begin{equation}\label{n2}
\parallel \tilde{f}_{u^{1}}-\tilde{f}_{u^{2}} \parallel_{\mathcal{K}^{s-1}(\mathcal{Y}_ {t})} \leq C_{2} \parallel u^{1}-u^{2} \parallel_{\mathcal{K}^{s}(\mathcal{Y}_{t})}.
\end{equation} $C_{2}$ being a positive, increasing function of $T_{0}$, $\parallel u^{1}\parallel_{\mathcal{E}^{s}(\mathcal{Y }_{t})}$ , $\parallel u^{2}\parallel_{\mathcal{E}^{s}(\mathcal{Y}_{t})}$, $ \parallel D_{u_{ i}}f\parallel_{C^{2s-3}_{b}(\mathcal{Y}_{t}\times \bar{V}_{c}\times \bar{V}_{c' })}$, independent of $t$.\\{
Since $u^{2}$ $\in$ $\mathcal{E}^{s}(\mathcal{Y}_{t})$ then $\partial^{2}_{\lambda\mu}u ^{2}$ $\in$ $\mathcal{E}^{s-2}(\mathcal{Y}_{t})$ thus, according to an inequality analogous to the inequality $(\ref{37ss})$ of theorem $\ref{36ss}$ $d)$ where we take $2\leq r=p=s$, we have: $\forall$ $t$ $\in$ $]0,T_{ 0}]$,
\begin{equation}\label{n3}
\parallel [\tilde{A}_{u^{2}}-\tilde{A}_{u^{1}}]\partial^{2}_{\lambda\mu}u^{2}_ {r} \parallel_{\mathcal{K}^{s-1}(\mathcal{Y}_{t})} \leq C_{3} \parallel u^{1}-u^{2} \parallel_ {\mathcal{K}^{s}(\mathcal{Y}_{t})}
\end{equation}
$C_{3}$ being a positive, increasing function of $T_{0}$, $\parallel u^{1}\parallel_{\mathcal{E}^{s}(\mathcal{Y}_{t} )}$ , $\parallel u^{2}\parallel_{\mathcal{E}^{s}(\mathcal{Y}_{t})}$, $ \parallel D_{u_{i}}A^ {\lambda\mu}\parallel_{C^{2s-3}_{b}(\mathcal{Y}_{t}\times \bar{V}_{c})}$, independent of $t$ .
By combining the relations $(\ref{n2})$ and $(\ref{n3})$, the inequality $(\ref{71000})$ becomes:
\begin{equation}\label{n4}
\parallel Q \parallel_{\mathcal{K}^{s-1}(\mathcal{Y}_{t})} \leq C_{4} \parallel u^{1}-u^{2} \parallel_ {\mathcal{K}^{s}(\mathcal{Y}_{t})}
\end{equation}$C_{4}$ being a positive, increasing function of $T_{0}$, $\parallel u^{1}\parallel_{\mathcal{E}^{s}(\mathcal{Y}_{t} )}$ ,
 $\parallel u^{2}\parallel_{\mathcal{E}^{s}(\mathcal{Y}_{t})}$,
 $ \parallel D_{u_{i}}f\parallel_{C^{2s-3}_{b}(\mathcal{Y}_{t}\times \bar{V}_{c}\times
 \bar{V}_{c'})}$, $ \parallel D_{u_{i}}A^{\lambda\mu}\parallel_{C^{2s-3}_{b}(\mathcal{ Y}_{t}\times \bar{V}_{c})}$ independent of $t$.\\
By combining the previous relations, the relation $(\ref{n4})$ and $(\ref{7090})$ we have $\forall$ $t$ $\in$ $]0,T_{0}]$ :
\begin{equation}\label{n5}
\parallel u^{1}-u^{2} \parallel_{\mathcal{E}^{s}(\mathcal{Y}_{t})} \leq C_{4} \parallel u^{1} -u^{2} \parallel_{\mathcal{K}^{s}(\mathcal{Y}_{t})}
\end{equation}
$C_{4}$ being of the same nature as $C_{3}$.\\
Using Gronwall's lemma at $(\ref{n5})$ we have:
\begin{center}
$\parallel u^{1}-u^{2} \parallel_{\mathcal{E}^{s}(\mathcal{Y}_{T_{0}})}\leq 0$.
\end{center}
So, in $\mathcal{E}^{s}(\mathcal{Y}_{T_{0}})$ we have $u^{1}=u^{2}$.
\section{Discussion} This work is not only a clarification of the
results of existence and uniqueness for second order linear
hyperbolic spatio-characteristic problems started in $\cite{wa}$ but
also an extension to the hyper-quasilinear hyperbolic
spatio-characteristic problems. We will use this result  to
establish a semi-global existence and uniqueness result \cite{lo}  in
Sobolev spaces with weights
    for second order hyper-quasilinear hyperbolic Goursat problems where the coefficients
     of the derivatives depend linearly on the unknowns variable which we will apply to
     the Einstein vacuum in harmonic gauge.

\end{document}